%




 \documentclass[final,3p,times]{elsarticle}


\usepackage{amssymb}


\usepackage{amsfonts}
\usepackage{float}
\usepackage{comment}
\usepackage{amsmath}
\usepackage{color}
 \usepackage{amsthm}
  \usepackage{amsbsy}
%
%
 \newtheorem{thm}{Theorem}[section]
 
 \newtheorem{question}{Question}[section]
 
 \newtheorem{lem}[thm]{Lemma}

\journal{}

\begin{document}

\begin{frontmatter}



\title{Notes on $k$-rainbow independent domination in graphs}


\author[gzhu]{Enqiang Zhu}
\author[dg]{Chanjuan Liu\footnote{corresponding author}}

\address[gzhu]{Institute of Computing Science and Technology, Guangzhou University, Guangzhou 510006, China}
\address[dg]{School of Computer Science and Technology, Dalian University of Technology, Dalian 116024, China}

\begin{abstract}
The $k$-rainbow independent domination number of a graph $G$, denoted $\gamma_{\rm rik}(G)$, is the cardinality  of a smallest set consisting of two vertex-disjoint independent sets $V_1$ and $V_2$ for which every vertex in $V(G)\setminus (V_1\cup V_2)$ has neighbors in both $V_1$ and $V_2$.  This domination invariant was proposed by {\v{S}}umenjak, Rall and Tepeh in (Applied Mathematics and Computation 333(15), 2018: 353-361), which allows to reduce the problem of computing the independent domination number of the generalized prism $G {\Box}  K_k$ to an integer labeling problem on $G$. They proved a Nordhaus-Gaddum-type theorem: $5\leq \gamma_{\rm rik}(G)+\gamma_{\rm rik}(\overline{G})\leq n+3$ for every graph $G$ of order $n\geq  3$, where $\overline{G}$ is the complement of $G$. In this paper, we improve this result by showing that if $G$ is not isomorphic to the 5-cycle, then $5\leq \gamma_{\rm rik}(G)+\gamma_{\rm rik}(\overline{G})\leq n+2$. Moreover, we show that the problem of deciding whether a graph has a $k$-rainbow independent dominating function of a given weight is $\mathcal{NP}$-complete. Our results respond some open questions proposed by \v{S}umenjak, et al.
\end{abstract}

\begin{keyword}
  Domination\sep $k$-rainbow independent domination \sep Nordhaus-Gaddum  \sep $\mathcal{NP}$-complete


\end{keyword}

\end{frontmatter}

\section{Introduction}

All graphs considered in this paper are simple and for notation and terminology not defined here we follow the book \cite{bondy2008}. Let $G$ be a graph with \emph{vertex set} $V(G)$ and \emph{edge set} $E(G)$. Two vertices are \emph{adjacent} in $G$ if they are the endpoints of an edge of $G$. We say that a vertex  $u\in V(G)$ is \emph{adjacent} to a set $U\subseteq G$ in $G$ if $U$ contains a vertex adjacent to $u$ in $G$.
For any $v\in V(G)$, $N_G(v)$=$\{u|uv \in E(G)\}$  is called the \emph{open neighborhood} of $v$ in $G$  and $N_G[v]$=$N_G(v)\cup \{v\}$ is the \emph{closed neighborhood} of $v$ in $G$.
Let $d_G(v)=|N_G(v)|$ denote the degree of $v$ in $G$ and  $\Delta(G)=\max \{d_G(v)|v\in V(G)\}$.  A vertex of degree $k$ and at least $k$ is called a $k$-vertex and $k^+$-vertex, respectively. For any $S\subseteq V(G)$, let $N_G(S)$=$\bigcup_{v\in S}N_G(v)\setminus S$ and  $N_G[S]$=$N_G(S)\cup S$. We say that $S$ \emph{dominates} a set $S'$ if $S'\subseteq N_G[S]$. Moreover, we use the notation  $G-S$ to denote the subgraph of $G$ obtained by deleting vertices of $S$ and their incident edges in $G$, and $G[S]=G-(V(G)\setminus S)$ subgraph of $G$ induced by $S$.  The complete graph with $n$ vertices and the cycle of length $n$ are denoted by $K_n$ and $C_n$, respectively.
For two integers $i,j$, $i<j$, we will make use the notation $[i,j]$ to denote the set $\{i,i+1,\ldots, j\}$.

Given a graph $G$ and a subset $D\subseteq V(G)$, we call $D$ a dominating set of $G$ if $D$ dominates $V(G)$. An \emph{independent set} of a graph is a set of vertices no two of which are adjacent in the graph.  If a dominating set $D$ of $G$ is an independent set, then $D$ is called an \emph{independent dominating set} (IDS for short) of $G$. The \emph{independent domination number} of $G$, denoted by $i(G)$, is the cardinality of a smallest independent dominating set of $G$.
Domination and independent domination in graphs have always attracted extensive attention \cite{haynes1998,goddard2013-0}, and many variants of domination \cite{haynes1998} have been introduced increasingly, for the applications in diverse fields, such as electrical networks, computational biology, land surveying, etc. Recent studies on these variations include strong roman domination \cite{Ruiz2017}, sum of domination number \cite{Ruiz2017}, semitotal domination\cite{goddard2014,zhu2018}, relating domination \cite{Henning2017}, just to name a few.


Let $G\Box H$ be the cartesian product of $G$ and $H$. To reduce the problem of determining $i(G\Box K_k)$ to an integer labeling problem on $G$ itself, {\v{S}}umenjak et al. \cite{tadeja2018} recently proposed a new variation of domination, called \emph{$k$-rainbow independent dominating function} of a graph $G$ ($k$RiDF for short), which is a function $f$: $V(G) \rightarrow [0,k]$ such that $V_i$ is an independent set and every vertex $v$ with $f(v)=0$ is adjacent to a vertex $u$ with $f(u)=i$, for all $i\in [1,k]$. Alternatively, a $k$RiDF $f$ of a graph $G$ may be viewed as an ordered partition  $(V_0,V_1,\ldots, V_k)$ such that $V_j$ is an independent set for $j=0,1,\ldots, k$ and $N_G(x)\cap V_i\neq \emptyset$ for every $x\in V_0$ and each $i\in [1,k]$, where $V_j$ denotes the set of vertices assigned value $j$ under $f$. The \emph{weight} $w(f)$ of a $k$RiDF $f$ is defined as the number of nonzero vertices, i.e., $w(f)$=$|V(G)|-|V_0|$.
The \emph{$k$-rainbow independent domination number} of $G$, denoted by $\gamma_{\rm rik}(G)$, is the minimum weight of a $k$RiDF of $G$.
From the definition, we have $\gamma_{ri1}(G)=i(G)$. A $\gamma_{rik}(G)$-function is a $k$RiDF of $G$ with weight $\gamma_{rik}(G)$.


Let $G$ be  a graph  and $H$  a subgraph of $G$. Suppose that $g$ is a $k$RiDF of $H$. We say that a $k$RiDF $f$ of $G$ is \emph{extended} from $g$ if $f(v)=g(v)$ for every  $v\in V(H)$. In what follows, to prove that a graph $G$ has a $k$RiDF, we will first find a $k'$RiDF $g$ of a  subgraph $G'$ of $G$, $k'\leq k$ and then extend $g$ to a $k$RiDF $f$ of $G$. As for the remaining part of this paper, Section \ref{sec3} is dedicated to characterizing graphs  $G$ with $\gamma_{\rm ri2}(G)$=$|V(G)|-1$, based on which  we in Section \ref{sec4} show an improved Nordhaus-Gaddum-type theorem on the sum of 2-rainbow independent domination number of $G$ and its complement.  In Section \ref{sec2} we are devoted to the proof of $\mathcal{NP}$-completeness of the $k$-rainbow independent domination problem, and in the last section we give a conclusion of this paper.


\section{Graphs $G$ with $\gamma_{\rm ri2}(G)$=$|V(G)|-1$}
\label{sec3}

To get the improved Nordhaus-Gaddum-type theorem on the sum of 2-rainbow independent domination number of $G$ and of $\overline{G}$, we have to characterize the graphs $G$ such that $\gamma_{\rm ri2}(G)$=$|V(G)|-1$. For this, we need the following special graphs.

 A star $S_n$, $n\geq 1$, is a complete bipartite graph $G[X, Y]$ with $|X|$=1 and $|Y|=n$, where the vertex in $X$ is called the \emph{center} of $S_n$ and the vertices in $Y$ are \emph{leaves} of $S_n$.
The graph obtained from $S_n$  by adding a single edge is denoted $S_n^+$.
A \emph{double star} \cite{grossman1979} is defined as the union of two vertex-disjoint stars with an edge connecting their centers. Specifically, for two integers $n,m$ such that $n\geq m\geq 0$ the \emph{double star}, denoted by $S(n,m)$, is the graph with vertex set $\{u_0,u_1,\ldots, u_n, v_0,v_1, \ldots, v_m\}$
and edge set $\{u_0v_0, u_0u_i,v_0v_j|1\leq i\leq n, 1\leq j\leq m\}$, where $u_0v_0$ is called the \emph{bridge} of $S(n,m)$ and the subgraphs induced by $\{u_i|i=0,1,\ldots, n\}$ and $\{v_j|j=0,1,\ldots, m\}$ are called the \emph{$n$-star at $u_0$} and \emph{$m$-star at $v_0$}. Observe that $S(n,m)$ is defined on the premise of $n\geq m$. For mathematical convenience, we denote a double star $S(n,m)$ as a vertex-sequence $v_mv_{m-1}\ldots v_0u_0u_1\ldots u_n$.

We start with a known result which characterizes graphs $G$ with $\gamma_{\rm ri2}(G)=n$.

\begin{lem}\label{n} \cite{tadeja2018}
For any graph $G$ of order $n$,  $\gamma_{\rm ri2}(G)=n$ if and only if every connected component of $G$ is isomorphic either to $K_1$ or $K_2$. In addition, if $\gamma_{\rm ri2}(G)=n$, then $\gamma_{\rm ri2}(\overline{G})=2$, where $\overline{G}$ is the complement of $G$.
\end{lem}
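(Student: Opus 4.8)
The plan rests on the fact that $\gamma_{\rm ri2}$ is additive over connected components. Since a disjoint union has no edges joining distinct components, an ordered partition $(V_0,V_1,V_2)$ is a $2$RiDF of $G$ precisely when its restriction to each component $C$ is a $2$RiDF of $C$, and the weights add; hence $\gamma_{\rm ri2}(G)=\sum_{C}\gamma_{\rm ri2}(C)$, the sum being over the components of $G$. Because $\gamma_{\rm ri2}(C)\le|V(C)|$ always (shown below) and $\sum_{C}|V(C)|=n$, the equality $\gamma_{\rm ri2}(G)=n$ holds if and only if $\gamma_{\rm ri2}(C)=|V(C)|$ for every component $C$. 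So the whole lemma reduces to deciding, for a connected graph $C$, when $\gamma_{\rm ri2}(C)=|V(C)|$, and both directions then fall out at once.

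Next I would settle the small cases and the universal upper bound. A vertex placed in $V_0$ needs a neighbor in $V_1$ and a distinct neighbor in $V_2$, so it has degree at least $2$; therefore neither $K_1$ nor $K_2$ admits a $2$RiDF with $V_0\ne\emptyset$, and assigning the colors $1,2$ to their vertices gives $\gamma_{\rm ri2}(K_1)=1$ and $\gamma_{\rm ri2}(K_2)=2$, i.e. $\gamma_{\rm ri2}(C)=|V(C)|$ whenever $|V(C)|\le 2$. For the bound $\gamma_{\rm ri2}(C)\le|V(C)|$ in general, take a maximal independent set $V_1$ of $C$ and a maximal independent set $V_2$ of $C-V_1$; by maximality every vertex of $V_0=V(C)\setminus(V_1\cup V_2)$ has a neighbor in each of $V_1$ and $V_2$, so $(V_0,V_1,V_2)$ is a valid $2$RiDF.

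The crux, and the step I expect to be the main obstacle, is to show that a connected graph $C$ on $m\ge 3$ vertices has $\gamma_{\rm ri2}(C)\le m-1$, equivalently that it admits a $2$RiDF with $V_0\ne\emptyset$; note such a $C$ has a vertex $v$ with $d_C(v)\ge 2$, since otherwise $C$ is $K_1$ or $K_2$. If $C$ is not bipartite, the maximal-independent-set function above already suffices: $V_0=\emptyset$ would make $V(C)=V_1\cup V_2$ a proper $2$-coloring, contradicting non-bipartiteness. The delicate case is $C$ bipartite with parts $A,B$, where that construction can collapse to a full $2$-coloring (as it does for the path on three vertices). Here, taking the vertex $v$ above to lie in $A$ (so its two neighbors $w_1,w_2$ lie in $B$), I would set $V_1=(B\setminus\{w_2\})\cup\{a\in A:N_C(a)=\{w_2\}\}$ and $V_2=\{w_2\}\cup\{a\in A:w_2\notin N_C(a)\}$, so that $V_0$ consists exactly of those $a\in A$ adjacent to $w_2$ that have a further neighbor, and in particular $v\in V_0$. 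One verifies that $V_1$ and $V_2$ are independent and that each $a\in V_0$ reaches $w_2\in V_2$ and another $B$-neighbor lying in $V_1$; the one pitfall, a vertex whose sole neighbor is $w_2$ and which therefore cannot see $V_1$, is precisely what the definition of $V_1$ absorbs. This yields $V_0\ne\emptyset$, and combining it with the reduction of the first paragraph gives $\gamma_{\rm ri2}(G)=n$ iff every component of $G$ is $K_1$ or $K_2$.

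For the additional claim, suppose $\gamma_{\rm ri2}(G)=n$, so $G$ is a disjoint union of copies of $K_1$ and $K_2$ with $n\ge 3$. On one hand $\gamma_{\rm ri2}(\overline{G})\ge 2$, since a weight-$1$ function has one of $V_1,V_2$ empty, which forces $V_0=\emptyset$ and hence $|V(\overline{G})|=1$, impossible. On the other hand I would pick two vertices $a,b$ with $N_G(a)\subseteq\{b\}$ and $N_G(b)\subseteq\{a\}$ (a $K_2$-component if $G$ has an edge, any two vertices otherwise) and set $V_1=\{a\}$, $V_2=\{b\}$, $V_0=V(G)\setminus\{a,b\}$; in $\overline{G}$ every vertex other than $a,b$ is adjacent to both $a$ and $b$, so this is a $2$RiDF of $\overline{G}$ of weight $2$. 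Therefore $\gamma_{\rm ri2}(\overline{G})=2$.
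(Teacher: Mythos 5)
Your proof is correct. Note that the paper does not prove this lemma at all --- it is quoted from the cited reference of \v{S}umenjak, Rall and Tepeh --- so there is no in-paper argument to compare yours against; what you have written is a valid self-contained verification. The structure is sound: additivity over components, the maximal-independent-set construction giving $\gamma_{\rm ri2}(C)\le |V(C)|$, the degree-$\ge 2$ obstruction handling $K_1$ and $K_2$, and the explicit bipartite construction around $w_2$ (which I checked on stars, paths and $C_4$; the absorption of vertices with $N_C(a)=\{w_2\}$ into $V_1$ is exactly the right fix) together give the equivalence, and the complement argument for the second claim is fine. The only caveat worth recording is that the second assertion fails for $n=1$ (there $\overline{G}=K_1$ has $\gamma_{\rm ri2}=1$, and your lower-bound step tacitly uses $n\ge 2$); this is a defect of the statement as quoted rather than of your argument, and is harmless since the paper only invokes the lemma for $n\ge 3$.
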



The following conclusion is simple but will be used throughout this paper.

\begin{lem}\label{lemma3-3}
Let $G$ be a graph and $H$ a subgraph of $G$. Suppose that $g=(V_0,V_1,\ldots, V_k)$ is a  $\gamma_{\rm rik}(H)$-function. Then $g$ can be extended to a kRiDF of $G$ with weight at most $|V(G)|-|V_0|$.
\end{lem}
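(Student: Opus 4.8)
The plan is to observe first that the weight estimate is automatic and then to concentrate on the genuine content, which is the mere existence of a valid extension. Since any extension $f$ must satisfy $f(v)=g(v)$ for every $v\in V(H)$, each vertex of $V_0$ keeps the value $0$; writing $V_0'$ for the zero-set of $f$, we get $V_0\subseteq V_0'$, and hence $w(f)=|V(G)|-|V_0'|\le|V(G)|-|V_0|$. Thus it suffices to produce a single $k$RiDF of $G$ that agrees with $g$ on $V(H)$, and the displayed weight bound will come for free.

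To build such an $f$, I would keep $f=g$ on $V(H)$ and colour the new vertices $W=V(G)\setminus V(H)$ greedily in an arbitrary order $w_1,\ldots,w_m$. When $w_j$ is reached, let $C_j\subseteq[1,k]$ be the set of colours already appearing on its neighbours (all of which lie in $V(H)$ or among $w_1,\ldots,w_{j-1}$, so their values are already fixed). If $C_j=[1,k]$, set $f(w_j)=0$; otherwise choose some $i\in[1,k]\setminus C_j$ and set $f(w_j)=i$. This rule is designed so that a vertex is made $0$ exactly when it already sees every colour, and is otherwise placed in a class it does not yet touch.

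It then remains to verify the two defining properties of a $k$RiDF. For the domination condition: an old vertex of $V_0$ retains, by validity of $g$ on $H$, a neighbour of each colour $1,\ldots,k$ inside $H$, and these edges survive in $G$ since $E(H)\subseteq E(G)$; a new vertex set to $0$ was chosen precisely because $C_j=[1,k]$, i.e.\ it already sees all $k$ colours on vertices whose values are final. For independence of a class $V_i'$: a new vertex coloured $i$ has, at the moment of assignment, no neighbour of colour $i$, and any later neighbour will avoid $i$ for the same reason, so no monochromatic edge is created among the new vertices or between a new and an old vertex. The one point that I expect to be the real crux — rather than a deep difficulty — is that the \emph{old} classes $V_i$ remain independent in $G$; this is exactly where I would invoke that $H$ is an (induced) subgraph, so that $G$ contributes no edge inside $V(H)$ beyond those of $H$ and therefore none inside any $V_i$. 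Granting this, $f$ is a $k$RiDF of $G$, and combining it with the first paragraph finishes the proof.
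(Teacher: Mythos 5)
Your greedy extension is exactly the paper's argument: the paper processes the vertices of $V(G)\setminus V(H)$ in some order, assigns each one the smallest colour $j\in[1,k]$ whose class it is not yet adjacent to, and assigns $0$ when no such colour exists, so your proposal is correct and takes essentially the same route (the weight bound being immediate from $V_0\subseteq V_0'$ in both cases). Your closing caveat --- that the old classes $V_i$ stay independent only because $G$ adds no edges inside $V(H)$, i.e.\ $H$ should be read as an induced subgraph --- is a legitimate point that the paper's one-line proof passes over silently; it is harmless in practice since every invocation of the lemma takes $H$ of the form $G[S]$.
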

\begin{proof}
Let $V(G)\setminus V(H)=\{x_1,\ldots,x_{\ell}\}$. We will deal with these vertices in the order of $x_1,\ldots, x_{\ell}$ by the following rule: for each $x_i$, $i\in [1,\ell]$, let $j\in[1,k]$ be the smallest one such that $x_i$ is not adjacent to $V_j$ in $G$. If such $j$ does not exist, we update $V_0$ by $V_0\cup \{x_i\}$; otherwise we update $V_j$ by $V_j\cup \{x_i\}$. After the last one, i.e., $x_{\ell}$ is handled, we obtain a $k$RiDF of $G$. Obviously, the weight of the resulting \emph{k}RiDF of $G$ is at most $|V(G)|-|V_0|$. \qed
\end{proof}

The following theorem clarifies the structure of  connected graphs $G$ with  $\gamma_{\rm ri2}(G)=|V(G)|-1$.

\begin{thm} \label{n-1}
Let $G$ be a connected graph of order $n\geq 3$. Then, $\gamma_{\rm ri2}(G)=n-1$ if and only if $G$ is isomorphic to one among $S_{n-1}$, $S_{n-1}^+$, $S(n-3,1)$ ($n\geq 4$) and $C_5$.
\end{thm}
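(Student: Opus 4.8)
For the "if" direction, I would directly verify that each of the four graph families has $\gamma_{\rm ri2} = n-1$. This breaks into two sub-tasks: showing $\gamma_{\rm ri2}(G) \leq n-1$ and $\gamma_{\rm ri2}(G) \geq n-1$. The upper bound is easy: since none of these graphs has all components isomorphic to $K_1$ or $K_2$ (they are connected of order $\geq 3$), Lemma~\ref{n} already gives $\gamma_{\rm ri2}(G) \leq n-1$. So the real work is the lower bound $\gamma_{\rm ri2}(G) \geq n-1$, i.e., ruling out any 2RiDF of weight $\leq n-2$. I would argue that $|V_0| \leq 1$ for each family: if two vertices $x, y$ both received value $0$, each would need neighbors in both $V_1$ and $V_2$, and I would show the sparse/star-like structure makes this impossible. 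For $S_{n-1}$, only the center has degree $\geq 2$, so at most one leaf can be in $V_0$, but that leaf's sole neighbor (the center) cannot lie in both $V_1$ and $V_2$ — so $V_0 = \emptyset$ and one checks the bound. For $S_{n-1}^+$ and $S(n-3,1)$ the argument is similar but requires tracking the one or two vertices of degree $\geq 2$. For $C_5$ this is a tiny finite check.

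For the harder "only if" direction, I assume $\gamma_{\rm ri2}(G) = n-1$ and deduce the structure. The strategy is to fix a $\gamma_{\rm ri2}(G)$-function $(V_0, V_1, V_2)$; since $\gamma_{\rm ri2}(G) \neq n$, Lemma~\ref{n} tells us $G$ has a component that is neither $K_1$ nor $K_2$, and since $G$ is connected with $n \geq 3$, $G$ itself is that component, so $G$ has a vertex of degree $\geq 2$ and $|V_0| = 1$; write $V_0 = \{x\}$. Then $x$ has a neighbor in $V_1$ and a neighbor in $V_2$. The core of the argument is to show that $G$ must be extremely sparse. I would bound the number of edges: the key idea is that if $G$ contained "too much" structure, one could construct a 2RiDF of smaller weight (weight $\leq n-2$), contradicting minimality. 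Concretely, I would look for a vertex $v$ of degree $\geq 2$ and try to set $v$ to $0$ in addition to $x$; the obstruction to doing so characterizes the allowed graphs. I expect the proof to proceed by case analysis on the number and arrangement of $2^+$-vertices.

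The main obstacle will be organizing the "only if" case analysis cleanly, in particular showing that $G$ can contain at most two vertices of degree $\geq 2$ and that these are forced into the double-star/star configurations (or else $G = C_5$). I anticipate the pivotal lemma-within-the-proof to be: \emph{if $G$ has two nonadjacent $2^+$-vertices, or a long induced path/cycle, then $\gamma_{\rm ri2}(G) \leq n-2$.} The reasoning is that such configurations give enough room to place two independent sets $V_1, V_2$ dominating at least two vertices into $V_0$. Once the $2^+$-vertices are confined to a single edge (the bridge) or a single vertex (the center), the pendant structure forces $G$ into exactly $S_{n-1}$, $S_{n-1}^+$, or $S(n-3,1)$; the sole exceptional sparse graph evading the star structure is the $5$-cycle, which must be isolated by hand. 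The delicate point throughout is ensuring the constructed low-weight 2RiDFs keep $V_1$ and $V_2$ \emph{independent}, since that independence constraint is exactly what prevents the bound from dropping in the four extremal families.
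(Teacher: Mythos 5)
Your overall strategy coincides with the paper's: for the ``if'' direction, use the fact that a vertex of $V_0$ needs neighbors in both $V_1$ and $V_2$ (so only $2^+$-vertices can be in $V_0$, and in these four graphs the $2^+$-vertices are pairwise adjacent or unique, forcing $|V_0|\leq 1$); for the ``only if'' direction, rule out configurations by exhibiting a 2RiDF of weight $n-2$ on a subgraph and extending it via Lemma~\ref{lemma3-3}. However, two of your stated intermediate claims are wrong as written. First, you say the case analysis will show that ``$G$ can contain at most two vertices of degree $\geq 2$.'' This is contradicted by one of the target graphs itself: $S_{n-1}^+$ has three $2^+$-vertices (the center and the two leaves joined by the extra edge), and $C_5$ has five. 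The correct structural statements (the paper's Observations 1--3) are: $G$ has at most one $3^+$-vertex, every $2^+$-vertex lies in its neighborhood, and that neighborhood contains at most two $2$-vertices, which must moreover be adjacent. If you set out to prove ``at most two $2^+$-vertices'' you would fail, and the deduction of the final structure would not go through.

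Second, your ``pivotal lemma'' (two nonadjacent $2^+$-vertices, or a long induced path/cycle, forces $\gamma_{\rm ri2}(G)\leq n-2$) is false as stated precisely because of $C_5$, and --- more importantly --- its proof is the entire substance of the theorem and you only gesture at it (``such configurations give enough room''). The whole difficulty, which you correctly identify but do not resolve, is keeping $V_1$ and $V_2$ independent when you try to push a second vertex into $V_0$: e.g.\ in $C_5$, two nonadjacent vertices share a neighbor and their remaining neighbors are adjacent, so no valid labeling exists. The paper's route through this is concrete: first note that no subgraph may admit a 2RiDF of weight $|V(H)|-2$, hence $G$ is $C_4$-free and any two vertices share at most one neighbor; this single fact is then what makes each of the explicit low-weight labelings in Observations 1--3 legal. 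Without that tool (or an equivalent one), your case analysis on shared neighbors and edges between neighborhoods has no way to certify independence, so the plan as it stands has a genuine gap at its central step.
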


\begin{proof}
Let $f=(V_0,V_1,V_2)$ be an arbitrary $\gamma_{\rm ri2}(G)$-function. Observe that  $V_0$ does not contain any 1-vertex; one can readily derive that $\gamma_{\rm ri2}(G)=n-1$ when $G$ is isomorphic to one of $S_{n-1}, S_{n-1}^+$, $S(n-3,1)$ and $C_5$. Conversely, suppose that $\gamma_{\rm ri2}(G)=n-1$, i.e. $|V_0|=1$.  By Lemma \ref{lemma3-3}, $G$ contains no subgraph $H$ that has a 2RiDF of weight at most $|V(H)|-2$.
Since $\gamma_{\rm ri2}(C_4)=2=|V(C_4)|-2$,  $G$ contains no subgraph isomorphic to $C_4$. This also shows that every two vertices of $G$ share at most one neighbor in $G$.

\textbf{Observation 1.} \emph{If $G$ contains a $3^+$-vertex $x$, then every $2^+$-vertex of $G$ belongs to $N_G(x)$}. Suppose to the contrary that $G$ contains a $2^+$-vertex $y$ such that $y\notin N_G(x)$.  Let  $\{x_1,x_2,x_3\}\subseteq N_G(x)$ and $\{y_1,y_2\}\subseteq N_G(y)$. Observe that  $|\{x_1,x_2,x_3\}\cap \{y_1,y_2\}|\leq 1$ and $|N_G(y_i)\cap \{x_1,x_2,x_3\}|\leq 1$ for $i=1,2$; we without loss of generality assume that $y_2\notin \{x_1,x_2,x_3\}$, $y_2x_2\notin E(G)$ and $y_2x_3\notin E(G)$. Let $f$ be: $f(x)=f(y)=0, f(x_2)=1, f(x_3)=2$. Notice that either $y_1=x_j$ or $y_1x_j\notin E(G)$ for some $j\in [2,3]$; we further let $f(y_1)=f(x_j)$ and $f(y_2)=[1,2]\setminus \{f(y_1)\}$. Clearly, $f$ is a 2RiDF of $G[\{x,x_2,x_3,y,y_1,y_2\}]$ of weight  $|\{x,x_2,x_3,y,y_1,y_2\}|-2$, a contradiction.

\textbf{Observation 2.} \emph{$G$ contains at most one $3^+$-vertex}.  Suppose to the contrary that $G$ has two distinct $3^+$-vertices, say $x$ and $y$.
By Observation 1,  $xy\in E(G)$. Let $\{y,x_1,x_2\}\subseteq N_G(x)$ and $\{x,y_1,y_2\}\subseteq N_G(y)$. Since $G$ contains no subgraph isomorphic to $C_4$, $|\{x_1,x_2\}\cap \{y_1,y_2\}|\leq 1$ and there are no edges between $\{x_1,x_2\}$ and $\{y_1,y_2\}$. We assume that $x_2\notin \{y_1,y_2\}$ and $y_2\notin \{x_1,x_2\}$. Then, the function $f$: $\{x,x_1,x_2,y,y_1,y_2\} \rightarrow \{0,1,2\}$ such that  $f(x)$=$f(y)$=$0$, $f(x_2)$=$f(y_2)$=$2$ and $f(x_1)$=$f(y_1)$=$1$,
is a 2RiDF of $G[\{x,y,x_1,x_2,y_1,y_2\}]$ of weight $|\{x,y,x_1,x_2,y_1,y_2\}|-2$,  a contradiction.

\textbf{Observation 3.}  \emph{If $G$ contains a $3^+$-vertex $x$, then $N_G(x)$ contains at most two 2-vertices; in particular, when $N_G(x)$ contains two 2-vertices, these two 2-vertices are adjacent in $G$}. If not, suppose that $N_G(x)$ contains three 2-vertices, say $x_1,x_2,x_3$. Without loss of generality, we assume that $x_3\notin N_G(\{x_1,x_2\})$ and let $N_G(x_3)=\{x, y_3\}$.  Let $N_G(x_1)=\{x,y_1\}$ (possibly $y_1=x_2$, but $y_1\neq y_3$). By Observation 1, $d_G(y_3)=1$, i.e., $y_1y_3\notin E(G)$. Let $f$ be: $f(x_1)=f(x_3)=0, f(x)=1, f(y_1)=f(y_3)=2$. Obviously, $f$ is a 2RiDF of $G[\{x,x_1,y_1,x_3,y_3\}]$ of weight $|\{x,x_1,y_1,x_3,y_3\}|-2$, a contradiction.
Now, suppose that $N_G(x)$ contains two 2-vertices, say $x_1,x_2$. If $x_1x_2\notin E(G)$, let $N_G(x_i)=\{x, y_i\}, i=1,2$. Clearly, $y_1\neq y_2$ and  $y_1y_2\notin E(G)$.  Let $f$ be: $f(x_1)=f(x_2)=0, f(x)=1, f(y_1)=f(y_2)=2$. Then, $f$ is a 2RiDF of $G[\{x,x_1,y_1,x_2,y_2\}]$ of weight $|\{x,x_1,x_2,y_1,y_2\}|-2$, a contradiction.

By the above three observations and the assumption that $G$ is connected, we see that if $G$ contains a $3^+$-vertex $x$, then  $V(G)\setminus \{x\}$ contains either only  1-vertices ($G\cong S_{n-1}$), or one 2-vertex and $n-2$  1-vertices ($G\cong S(n-3,1)$), or two adjacent 2-vertices  and $n-3$ 1-vertices ($G\cong S_{n-1}^+$); if $\Delta(G)=2$, then it is easy to see that $G$ is isomorphic to one of  $S^+_2, S_2, S(1,1)$ and $C_5$. \qed
\end{proof}

According to Lemma \ref{n}, Theorem \ref{n-1} and the fact that   $\gamma_{\rm ri2}(G)$=$\sum_{i=1}^{k}\gamma_{\rm ri2}(G_i)$, where $G_1,\ldots, G_k$ are the components of $G$, we have the following theorem.

\begin{thm} \label{n-1-thm}
Let $G$ be a  graph of order $n\geq 3$. Then, $\gamma_{\rm ri2}(G)=n-1$ if and only if  $G$ has one component $G_1$  isomorphic to one among $S_{n_1-1}$ ($n_1\geq 3$), $S_{n_1-1}^+$ ($n_1\geq 3$), $S(n_1-3,1)$ ($n_1\geq 4$) and $C_5$, and other components are isomorphic to $K_1$ or $K_2$, where $n_1=|V(G_1)|$.
\end{thm}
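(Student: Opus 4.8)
The plan is to reduce the statement to the connected case via the additivity of $\gamma_{\rm ri2}$ over components, and then read off the answer from Lemma \ref{n} and Theorem \ref{n-1}. Write $G$ as the disjoint union of its components $G_1,\ldots,G_k$ with $n_i=|V(G_i)|$, so that $\sum_i n_i=n$. Since the weight of any 2RiDF of a graph $H$ equals $|V(H)|-|V_0|\le |V(H)|$, we have $\gamma_{\rm ri2}(G_i)\le n_i$ for each $i$; set the nonnegative integer deficiency $\delta_i:=n_i-\gamma_{\rm ri2}(G_i)\ge 0$. Using the stated additivity $\gamma_{\rm ri2}(G)=\sum_i\gamma_{\rm ri2}(G_i)$, I obtain $n-\gamma_{\rm ri2}(G)=\sum_{i=1}^k\delta_i$.

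The core step is then purely arithmetic: $\gamma_{\rm ri2}(G)=n-1$ is equivalent to $\sum_i\delta_i=1$, and since the $\delta_i$ are nonnegative integers, this holds if and only if exactly one component, say $G_1$, satisfies $\delta_1=1$ while every other component satisfies $\delta_i=0$. It remains to translate the two conditions $\delta_i=0$ and $\delta_1=1$ into the structural descriptions of the components.

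For the first, $\delta_i=0$ means $\gamma_{\rm ri2}(G_i)=n_i$, and applying Lemma \ref{n} to the connected graph $G_i$ forces $G_i\cong K_1$ or $G_i\cong K_2$. For the second, $\delta_1=1$ means $\gamma_{\rm ri2}(G_1)=n_1-1$. Here I first note that necessarily $n_1\ge 3$: the only connected graphs of order $1$ or $2$ are $K_1$ and $K_2$, both satisfying $\gamma_{\rm ri2}=n_1$ (deficiency $0$), so neither can realize $\delta_1=1$. With $n_1\ge 3$ established, Theorem \ref{n-1} applies to the connected graph $G_1$ and yields that $G_1$ is isomorphic to one of $S_{n_1-1}$, $S_{n_1-1}^+$, $S(n_1-3,1)$ (which itself requires $n_1\ge 4$), or $C_5$. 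This is precisely the asserted list, completing the forward implication. The converse is immediate: if $G_1$ is one of these graphs then $\gamma_{\rm ri2}(G_1)=n_1-1$ by Theorem \ref{n-1}, while $\gamma_{\rm ri2}(G_i)=n_i$ for the remaining $K_1$/$K_2$ components by Lemma \ref{n}, whence $\gamma_{\rm ri2}(G)=(n_1-1)+\sum_{i\ge 2}n_i=n-1$ by additivity.

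I do not expect a genuine obstacle, since the result is essentially a bookkeeping consequence of the two preceding characterizations; the only point requiring care is the alignment of the order hypotheses. Specifically, one must verify that the single deficient component cannot be $K_1$ or $K_2$ (so that Theorem \ref{n-1}, stated only for order $n\ge 3$, is legitimately applicable), and that the parameter ranges $n_1\ge 3$ and $n_1\ge 4$ appearing in the final list agree with the domains on which $S_{n_1-1}$, $S_{n_1-1}^+$ and $S(n_1-3,1)$ are defined.
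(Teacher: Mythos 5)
Your proof is correct and follows essentially the same route as the paper, which derives the theorem directly from Lemma \ref{n}, Theorem \ref{n-1}, and the additivity of $\gamma_{\rm ri2}$ over components; your deficiency bookkeeping $\delta_i=n_i-\gamma_{\rm ri2}(G_i)$ simply makes explicit the one-line argument the paper leaves to the reader, including the needed check that the unique deficient component has order at least $3$.
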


\section{An improved Nordhaus-Gaddum type theorem for $\gamma_{\rm ri2}(G)$}
\label{sec4}
This section is devoted to achieve an improved Nordhaus-Gaddum type theorem by showing that $\gamma_{\rm ri2}(G)+\gamma_{\rm ri2}(\overline{G})\leq n+2$ for every graph $G\not\cong C_5$ of order $n\geq 2$, which improves a result obtained by \v{S}umenjak, et al \cite{tadeja2018}. Before doing so, we need to establish some simple lemmas.


\begin{lem}\label{n-1-lem}
Let $G$ be a graph of order $n\geq 3$. If $G$ is isomorphic to $S_{n-1}, S_{n-1}^+$ or $S(n-3,1)$, then  $\gamma_{\rm ri2}(\overline{G})\leq 3$.
\end{lem}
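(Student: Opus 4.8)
The plan is to treat the three families separately and, in each case, to describe the complement $\overline{G}$ explicitly and then exhibit a concrete $2$RiDF $(V_0,V_1,V_2)$ of weight exactly $3$. Since $w(f)=|V_1|+|V_2|$, it suffices to produce two disjoint independent sets $V_1,V_2$ with $|V_1|+|V_2|\le 3$ such that every vertex outside $V_1\cup V_2$ has a neighbour in each of $V_1$ and $V_2$.

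First I would record how the complement looks. Writing $c=u_0$ for the centre and $L=\{u_1,\dots,u_{n-1}\}$ for the leaves, the centre is adjacent to all of $L$ in $G$ and hence is isolated in $\overline{G}$, while $L$, being independent in $G$, induces $K_{n-1}$ in $\overline{G}$. Thus $\overline{S_{n-1}}$ is $K_1$ together with $K_{n-1}$, and $\overline{S_{n-1}^+}$ is $K_1$ together with $K_{n-1}$ minus the single edge $u_1u_2$. For the double star $S(n-3,1)$ ($n\ge 4$), with leaves $u_1,\dots,u_{n-3}$ at $u_0$, bridge $u_0v_0$ and pendant $v_1$ at $v_0$, I would note that in $\overline{G}$ the vertex $u_0$ is adjacent only to $v_1$, the vertex $v_0$ is adjacent exactly to $u_1,\dots,u_{n-3}$, and $v_1$ is adjacent to $u_0,u_1,\dots,u_{n-3}$.

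For the two star cases the construction is uniform: since $n\ge 3$ gives $|L|\ge 2$, set $V_1=\{c,u_1\}$, $V_2=\{u_2\}$ and leave the remaining vertices in $V_0$. Here $V_1$ is independent because $c$ is isolated in $\overline{G}$; in the $S_{n-1}$ case every other leaf is adjacent in the clique $K_{n-1}$ to both $u_1$ and $u_2$, and in the $S_{n-1}^+$ case the only non-edge among the leaves is $u_1u_2$, so every leaf $u_i$ with $i\ge 3$ is still adjacent to both $u_1$ and $u_2$. In either case the weight is $3$. The double star is where the one genuine obstacle lies: because $u_0$ has degree $1$ in $\overline{G}$ (its sole neighbour being $v_1$), it can never lie in $V_0$ and so is forced to be nonzero, and it cannot be paired with its neighbour $v_1$ without destroying independence. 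I would resolve this by exploiting that $u_0v_0$ is the bridge of $G$ and hence a non-edge of $\overline{G}$: take $V_1=\{v_1\}$, $V_2=\{v_0,u_0\}$ and $V_0=\{u_1,\dots,u_{n-3}\}$. Then $V_2$ is independent, each $u_i$ is adjacent to $v_1\in V_1$ and to $v_0\in V_2$, and the weight is again $3$; this works for every $n\ge 4$, including the degenerate case $n=4$, where $\overline{S(1,1)}$ is the path on four vertices.

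What remains is routine verification: that each declared pair $V_1,V_2$ is genuinely independent in $\overline{G}$ and that every vertex placed in $V_0$ indeed sees both colour classes, both of which follow immediately from the neighbourhood descriptions above. The only delicate point is the double-star case, where the degree-$1$ vertex $u_0$ of $\overline{G}$ must be placed in the same class as $v_0$, the bridge non-edge being exactly what keeps that class independent.
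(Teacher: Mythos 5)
Your proposal is correct and follows essentially the same strategy as the paper: explicitly exhibit a weight-$3$ $2$RiDF of $\overline{G}$ in each of the three cases. The only (harmless) differences are in the choice of colour classes — e.g.\ for the double star you take $V_1=\{v_1\}$, $V_2=\{v_0,u_0\}$, which has the minor advantage of covering $n=4$ uniformly, whereas the paper uses $V_1=\{u_0,u_1\}$, $V_2=\{u_2\}$ for $n\ge 5$ and treats $n=4$ (where $\overline{G}\cong P_4$) separately.
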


\begin{proof}
If $G\cong S_{n-1}$ or $G\cong S_{n-1}^+$, let $V(G)=\{v_0,v_1,\ldots, v_{n-1}\}$ where $v_0$ is the center and $v_1v_2\in E(G)$ when $G\cong S_{n-1}^+$. Define a function $f$ such that $f(v_1)=1, f(v_0)=f(v_2)=2$ and $f(v)=0$ for every $v\in V(\overline{G})\setminus \{v_0,v_1,v_2\}$. Since every vertex in $V(\overline{G})\setminus \{v_0,v_1,v_2\}$ is adjacent to both $v_1$ and $v_2$ in $\overline{G}$, it follows that $f$ is a 2RiDF of $\overline{G}$ of weight 3.

If $G\cong S(n-3,1)$, then $n\geq 4$. Let $G=v_1v_0u_0u_1\ldots u_{n-3}$. If $n=4$, then both $G$ and $\overline{G}$ are isomorphic to $P_4$, the path of length 3, and the conclusion holds. If $n\geq 5$, then the function $f$: $V(\overline{G})\rightarrow \{0,1,2\}$ such that $f(u_1)=f(u_0)=1, f(u_2)=2$ and $f(v)=0$ for every $v\in V(\overline{G})\setminus \{v_0,v_1,v_2\}$ is a 2RiDF of $\overline{G}$ with weight 3.\qed
\end{proof}

\begin{lem}\label{four}
Let $G$ be a graph of order $n$. If $G\not\cong C_5$ and $\gamma_{\rm ri2}(G)=4$, then $\gamma_{\rm ri2}(\overline{G})\leq n-2$.
\end{lem}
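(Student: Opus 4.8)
The plan is to argue by contraposition: I assume $\gamma_{\rm ri2}(\overline G)\ge n-1$ together with $G\not\cong C_5$, and I aim to deduce $\gamma_{\rm ri2}(G)\le 3$, which contradicts $\gamma_{\rm ri2}(G)=4$. The reason to prefer this direction is that the hypothesis $\gamma_{\rm ri2}(\overline G)\ge n-1$ is extremely rigid. By Lemma~\ref{n}, either $\gamma_{\rm ri2}(\overline G)=n$, in which case the additional clause of Lemma~\ref{n} applied to $\overline G$ forces $\gamma_{\rm ri2}(G)=2$ and we are finished; or else $\gamma_{\rm ri2}(\overline G)=n-1$, in which case Theorem~\ref{n-1-thm} pins $\overline G$ down completely as $\overline G=H_1\sqcup R$, where $H_1\in\{S_{n_1-1},\,S_{n_1-1}^+,\,S(n_1-3,1),\,C_5\}$ and $R$ is a disjoint union of copies of $K_1$ and $K_2$.

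It then remains to bound $\gamma_{\rm ri2}(G)$ above in this situation. Since complementation sends disjoint union to join, $G=\overline{H_1}+\overline R$, and the feature I would exploit is that a join manufactures (near-)universal vertices: a vertex universal in $\overline{H_1}$ or in $\overline R$ becomes universal in $G$, and the dense part of each $\overline{H_1}$ contributes vertices missing only a single neighbour. I would compute $\overline{H_1}$ explicitly in each case — for instance $\overline{S_{n_1-1}}=K_1\cup K_{n_1-1}$ and $\overline{S_{n_1-1}^+}=K_1\cup(K_{n_1-1}-e)$ — and then exhibit a $2$RiDF of $G$ of weight at most $3$. The recurring device is to choose $V_1,V_2$ either as a non-adjacent pair of (near-)universal vertices, which already yields weight $2$, or as a singleton together with an independent dominating pair lying on one side of the join, yielding weight $3$; throughout, one must respect the fact that an independent set in a join lies entirely on one side.

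The delicate point, and where I expect the main work, is $H_1=C_5$, precisely because $\overline{C_5}\cong C_5$. If $R=\emptyset$ then $G\cong C_5$, which is exactly the excluded graph — this is the only place the hypothesis $G\not\cong C_5$ is needed, and it is reassuring that every configuration with $R\neq\emptyset$ is a nontrivial join and hence cannot be $C_5$. If $R\neq\emptyset$ then $G=C_5+Y$ with $Y=\overline R\neq\emptyset$, and I must verify $\gamma_{\rm ri2}(G)\le 3$ for every admissible $Y$: when $R$ has an isolated vertex, $Y$ has a vertex universal in $G$, and when $R$ has a $K_2$ component, $Y$ has a non-adjacent pair each adjacent to all remaining vertices of $G$; either way two suitably coloured vertices dominate the rest. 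The leftover subcase $Y\cong K_1$, i.e.\ the wheel $W_5$, needs the explicit weight-$3$ labelling $V_1=\{\text{hub}\}$, $V_2=\{c_1,c_3\}$ with $c_1,c_3$ a dominating independent pair of the rim. The genuine bookkeeping obstacle is twofold: the image of a star centre is isolated in $\overline{H_1}$, hence adjacent in $G$ only to the $\overline R$-side, so both of its colours must be supplied from $\overline R$; and partner pairs inside $\overline R$ can block a single colour class, occasionally forcing a third coloured vertex. Checking that the weight stays at most $3$ — and not merely $4$ — in every subcase is precisely what produces the contradiction.

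Finally, I note a cleaner sufficient condition that dispatches the generic situation without the classification: whenever $\overline G$ contains an induced $C_4$, equivalently whenever $G$ contains an induced $2K_2$, Lemma~\ref{lemma3-3} together with $\gamma_{\rm ri2}(C_4)=2$ already extends to a $2$RiDF of $\overline G$ of weight at most $n-2$. It is exactly the $2K_2$-free graphs, a family that contains $C_5$, for which this shortcut fails and the structural analysis above is required; this explains, from a second angle, why $C_5$ is the sole exception.
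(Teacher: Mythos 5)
Your proposal is correct and follows essentially the same route as the paper: take the contrapositive, use Lemma~\ref{n} and Theorem~\ref{n-1-thm} to pin down $\overline{G}$, observe that a $K_2$ component or two $K_1$ components of $\overline{G}$ yield a weight-$2$ function of $G$, and otherwise build an explicit weight-$3$ $2$RiDF from the (near-)universal vertices that the complement structure provides, with $\overline{C_5}\cong C_5$ as the lone escape. Your join-based framing and the closing $2K_2$-free remark are pleasant repackagings, but the substance of the argument coincides with the paper's proof.
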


\begin{proof}
Clearly, $n\geq 4$. When $n=4$ and $n=5$, the conclusion is easy to prove and we assume that $n\geq 6$.
Suppose, to the contrary, that $\gamma_{\rm ri2}(\overline{G})\geq n-1$. If $\gamma_{\rm ri2}(\overline{G})=n$, then $\gamma_{\rm ri2}(G)=2$ by Lemma \ref{n}, a contradiction. Therefore, $\gamma_{\rm ri2}(\overline{G})=n-1$. By Theorem \ref{n-1-thm} $\overline{G}$ has one component isomorphic to $S_{n_1}, S_{n_1}^+$, $S(n_2,1)$ or $C_5$ where $n_1\geq 2, n_2\geq 1$, and all of the other components of $\overline{G}$ are isomorphic to $K_1$ or $K_2$.

If $\overline{G}$ contains two vertices $u,v$ such that $N_{\overline{G}}\{u,v\}=\emptyset$, then each of $u$ and $v$ is adjacent to all vertices of $V(G)\setminus \{u,v\}$ in $G$. We can obtain a 2RiDF of $G$ by assigning 1 to $u$, 2 to $v$, and 0 to the remained vertices of $G$. This indicates that $\gamma_{\rm ri2}(G)\leq 2$ and a contradiction. Therefore, $\overline{G}$ contains no $K_2$ components and contains at most one $K_1$ component, which implies that $\overline{G}$ has at most two components. If $\overline{G}$ contains only one component, then $\overline{G}$ is isomorphic to  $S_{n-1}, S_{n-1}^+$ or $S(n-3,1)$ (since $G\not \cong C_5$). By Lemma \ref{n-1-lem} $\gamma_{\rm ri2}(G)\leq 3$ and a contradiction.
Therefore,  $\overline{G}$ has two components, denoted by $G_1$ and $G_2$, where $G_1\cong K_1$ and $G_2$  is isomorphic to $S_{n-2}, S_{n-2}^+$, $S(n-4,1)$ or $C_5$. Let $V(G_1)=\{u\}$ and define a function $f$ as follows:
let $f(u)=1$; $f(v_0)=f(v')=2$ when $G_2\cong S_{n-2}$ or $G_2\cong S_{n-2}^+$ (where $v_0$ is the center of $G_2$ and $v'$ is a 1-vertex of $G_2$. Since $n\geq 6$ such $v'$ does exist), $f(v_0)=f(u_0)=2$ when $G_2\cong S(n-4,1)$ (where $v_0u_0$ is the bridge of $G_2$), or $f(u_1)=f(u_2)=2$ when $G_2\cong C_5$ (where $C_5=u_1u_2u_3u_4u_5u_1$); and all of the other remained vertices are assigned value 0. Clearly, every vertex assigned value 0 is adjacent to $u$ and a vertex assigned value 2. Hence, $f$ is a 2RiDF of $G$ with weight 3, and a contradiction. \qed
\end{proof}

\begin{lem}\label{add8-8}
Suppose that $G$ is a graph of order $n$ such that $\gamma_{\rm ri2}(G)\geq 4$ and $\gamma_{\rm ri2}(G)$+$\gamma_{\rm ri2}(\overline{G})$=$n+3$. Let $f=(V_0,V_1,V_2)$ be an arbitrary $\gamma_{\rm ri2}(G)$-function. Then,
\begin{enumerate}
  \item [(1)] If $|V_0|\geq 2$, then for any two vertices $u,v\in V_0$, there are no vertices $u_1,u_2,v_1,v_2$ such that  $\{u_1,u_2\}\in N_{\overline{G}}(u)$, $\{v_1,v_2\}\in N_{\overline{G}}(v)$ and $u_iv_i\notin E(\overline{G})$ for $i=1,2$, where $u_1\neq u_2, v_1\neq v_2$ but possibly $u_i=v_i$;
  \item [(2)] Suppose that $u, v$ be two arbitrary vertices of $V_0$. Then, $|N_{\overline{G}}(\{u,v\})|\geq 3$.
  \item [(3)]  $|V_i|\geq 2$ for $i=0,1,2$.
\end{enumerate}
\end{lem}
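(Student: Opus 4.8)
The plan is to convert the global extremal hypothesis into a single numerical identity and then prove each item by the same mechanism: manufacturing a 2RiDF of $\overline{G}$ that is one unit too light. Since $f$ is a $\gamma_{\rm ri2}(G)$-function we have $\gamma_{\rm ri2}(G)=n-|V_0|$, so the assumption $\gamma_{\rm ri2}(G)+\gamma_{\rm ri2}(\overline{G})=n+3$ is equivalent to $\gamma_{\rm ri2}(\overline{G})=|V_0|+3$. Hence \emph{every} 2RiDF of $\overline{G}$ has at most $n-|V_0|-3=\gamma_{\rm ri2}(G)-3$ zeros, and to contradict any of (1)--(3) it suffices to display a 2RiDF of $\overline{G}$ with at least $\gamma_{\rm ri2}(G)-2$ zeros, i.e. of weight at most $|V_0|+2$. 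The crucial structural input is that the independent sets $V_1,V_2$ of $G$ are \emph{cliques} in $\overline{G}$: if I keep two vertices $a,a'$ of $V_1$ and two vertices $b,b'$ of $V_2$ and send the remaining vertices of $V_1\cup V_2$ to $0$, every such $0$-vertex is $\overline{G}$-adjacent to both retained vertices of its own class, hence is automatically rainbow-dominated. Throughout I will build a 2RiDF on the subgraph $H=\overline{G}-(V_0\setminus\{u,v\})$ and finish with Lemma \ref{lemma3-3}.

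For (1), assume $|V_0|\ge 2$ and that the forbidden vertices $u_1,u_2,v_1,v_2$ exist. I set $u_1,v_1\mapsto 1$ and $u_2,v_2\mapsto 2$; because $u_1v_1\notin E(\overline{G})$ and $u_2v_2\notin E(\overline{G})$ the two color classes restricted to these vertices are independent, and $u$ (resp. $v$) sees color $1$ at $u_1$ (resp. $v_1$) and color $2$ at $u_2$ (resp. $v_2$), so $u,v$ may be placed at $0$. Combining this with the clique gadget above yields a partition of $H$ whose zero set contains $u,v$ together with $V_1\setminus\{a,a'\}$ and $V_2\setminus\{b,b'\}$, i.e. $|V_1|+|V_2|-2$ zeros; extending by Lemma \ref{lemma3-3} gives a 2RiDF of $\overline{G}$ of weight at most $|V_0|+2$, the desired contradiction. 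The main obstacle lies exactly here: the two gadgets must be merged into \emph{two} globally independent color classes of $\overline{G}$, and since a vertex of $V_1$ and a vertex of $V_2$ need not be adjacent in $G$ (hence may be adjacent in $\overline{G}$), the representatives $a,a',b,b'$ and the configuration vertices $u_1,u_2,v_1,v_2$ have to be chosen compatibly. Resolving this will use the freedom of choosing $a,a',b,b'$ inside the (large) cliques $V_1,V_2$ and the permissible coincidences $u_i=v_i$.

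Item (2) I would deduce from (1). If $|N_{\overline{G}}(\{u,v\})|=0$ then $u,v$ dominate $V(G)\setminus\{u,v\}$ in $G$, giving $\gamma_{\rm ri2}(G)\le 2$; if $|N_{\overline{G}}(\{u,v\})|=1$ a direct three-vertex assignment gives $\gamma_{\rm ri2}(G)\le 3$; both contradict $\gamma_{\rm ri2}(G)\ge 4$. The interesting case $|N_{\overline{G}}(\{u,v\})|=2=\{x,y\}$ is where (1) enters: were both $u$ and $v$ adjacent in $\overline{G}$ to both $x$ and $y$, the choice $u_1=v_1=x$, $u_2=v_2=y$ would be a forbidden configuration (the condition $u_iv_i\notin E(\overline{G})$ holds trivially when $u_i=v_i$), contradicting (1). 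Hence one of $u,v$ has $\overline{G}$-degree at most $1$, so it is $G$-adjacent to all but at most one other vertex, and a weight-$3$ 2RiDF of $G$ can again be assembled, contradicting $\gamma_{\rm ri2}(G)\ge 4$.

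Finally, for (3) the bound $|V_0|\ge 2$ comes from the two known classifications: $|V_0|=0$ forces $\gamma_{\rm ri2}(G)=n$, whence $\gamma_{\rm ri2}(\overline{G})=2$ by Lemma \ref{n} and the sum is $n+2$, a contradiction; $|V_0|=1$ forces $\gamma_{\rm ri2}(G)=n-1$, so by Theorem \ref{n-1-thm} the non-trivial component of $G$ is a star-type graph or $C_5$, and Lemma \ref{n-1-lem} then gives $\gamma_{\rm ri2}(\overline{G})\le 3$ (contradiction) unless $G\cong C_5$, which is excluded in this section. For $|V_1|\ge 2$ and $|V_2|\ge 2$ I would argue that a singleton class, say $V_1=\{a\}$, forces every vertex of $V_0$ to be $G$-adjacent to $a$ and hence makes $\overline{G}$ sparse around $a$, which again permits a 2RiDF of $\overline{G}$ of weight at most $|V_0|+2$ (retaining representatives of the clique $V_2$ and treating $a$ and $V_0$ separately). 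The recurring hard point, and the one I expect to absorb most of the work, is the independence bookkeeping of the two color classes in $\overline{G}$ when several gadgets are superimposed, together with the careful exclusion of $C_5$ in the $|V_0|=1$ analysis.
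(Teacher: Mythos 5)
Your overall strategy — rewrite the hypothesis as $\gamma_{\rm ri2}(\overline{G})=|V_0|+3$, then contradict it by exhibiting a 2RiDF of $\overline{G}$ of weight at most $|V_0|+2$ (or a too-light 2RiDF of $G$), exploiting that $V_1,V_2$ are cliques in $\overline{G}$ — is exactly the paper's. But two steps have genuine gaps. First, in item (1) you manufacture a difficulty that you then explicitly leave unresolved: you want to pre-select representatives $a,a'\in V_1$ and $b,b'\in V_2$, color them $1$ and $2$, and zero out the rest, and you correctly observe that the color classes $\{a,b,u_1,v_1\}$ and $\{a',b',u_2,v_2\}$ need not be independent in $\overline{G}$. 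There is no reason to expect a compatible choice always exists, so the argument as described would fail. The paper's resolution is to select nothing: define $g$ only on $\{u,v,u_1,u_2,v_1,v_2\}$ with $g(u)=g(v)=0$, $g(u_i)=g(v_i)=i$, extend greedily by Lemma \ref{lemma3-3}, and bound the weight of the \emph{resulting} 2RiDF by counting: its two nonzero classes are independent in $\overline{G}$, hence meet each clique $V_1,V_2$ in at most one vertex each, so at most $4$ nonzero vertices lie in $V_1\cup V_2$ and at most $|V_0|-2$ in $V_0$ (since $u,v$ stay at $0$), giving weight $\le |V_0|+2$. You should drop the representatives entirely and argue this way.

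Second, in item (2) the case $|N_{\overline{G}}(\{u,v\})|=2$ does not close. Note that $u,v\in V_0$ are independent in $G$, hence \emph{adjacent} in $\overline{G}$, so after applying (1) the low-degree vertex has $\overline{G}$-degree at most $2$ (not $1$), and it is $G$-adjacent to all but at most two vertices. Your claimed weight-$3$ 2RiDF of $G$ is not justified and in general is not available: the honest conclusion is only $\gamma_{\rm ri2}(G)\le 4$ (assign $1$ to $u$, $2$ to $v$, $0$ outside $N_{\overline{G}}[\{u,v\}]$ and extend by Lemma \ref{lemma3-3}), which is consistent with the hypothesis $\gamma_{\rm ri2}(G)\ge 4$ and yields no contradiction by itself. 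The missing ingredient is Lemma \ref{four}: from $\gamma_{\rm ri2}(G)=4$ one gets $\gamma_{\rm ri2}(\overline{G})\le n-2$, contradicting $\gamma_{\rm ri2}(\overline{G})=n-\gamma_{\rm ri2}(G)+3=n-1$. You never invoke Lemma \ref{four}, so (2) is incomplete. Relatedly, your argument for $|V_1|\ge 2$ in (3) is only a gesture ("$\overline{G}$ is sparse around $a$"); the actual mechanism is a dichotomy — either some vertex of $V_0$ has two $\overline{G}$-neighbors, which seeds a 2RiDF of $\overline{G}$ of weight $\le|V_0|-1+1+2$ using that $V_1$ is a singleton and $V_2$ a clique, or every vertex of $V_0$ has $\overline{G}$-degree at most $1$, which violates part (2) — and that needs to be spelled out.
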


\begin{proof}
For (1), if the conclusion were false, let $g$ be: $g(u_i)=g(v_i)=i$ for $i=1,2$ and  $g(u)=g(v)=0$. Then, $g$ is a 2RiDF of $\overline{G}[\{u,v, u_1,v_1,u_2,v_2\}]$ with weight $|\{u,v, u_1,v_1,u_2,v_2\}|-2$. Since $V_1$ and $V_2$ are cliques in $\overline{G}$, $V_i$, for $i=1,2$, contains at most two vertices not assigned 0 under every 2RiDF of $\overline{G}$. Hence, by Lemma \ref{lemma3-3} $g$ can be extended to a 2RiDF of $\overline{G}$ with weight at most $|V_0|-2+4=|V_0|+2$. This shows that $\gamma_{\rm ri2}(\overline{G})\leq |V_0|+2$ and $\gamma_{\rm ri2}(G)$+$\gamma_{\rm ri2}(\overline{G})\leq |V_1|+|V_2|+|V_0|+2=n+2$, a contradiction.

For (2),   if $|N_{\overline{G}}(\{u,v\})|\leq 2$, let $f$ be: $f(u)=1,f(v)=2$ and $f(x)=0$ for $x\in V(G)\setminus N_{\overline{G}}[\{u,v\}]$. Clearly, $f$ is a 2RiDF of $G[V(G)\setminus N_{\overline{G}}(\{u,v\})]$ with weight 2. By Lemma \ref{lemma3-3}, $f$ can be extended to a 2RiDF of $G$ with weight at most $4$, since $|N_{\overline{G}}(\{u,v\})|\leq 2$. Thus, $\gamma_{\rm ri2}(G)=4$ and by Lemma \ref{four} $\gamma_{\rm ri2}(\overline{G})\leq n-2$,  a contradiction.

For (3), if $|V_0|=1$, then $\gamma_{\rm ri2}(G)$=$n-1$. By an analogous argument as that in Lemma \ref{four}, we can derive that $\gamma_{\rm ri2}(G)+\gamma_{\rm ri2}(\overline{G})\leq n+2$, a contradiction. In the following, we prove that $|V_1|\geq 2$ (the proof of $|V_2|\geq 2$ is similar to that of $|V_2|\geq 2$). Suppose that $|V_1|=1$ and let $V_1=\{u\}$. Then, every vertex of $V_0$ is adjacent to $u$ in $G$, i.e., $u$ is not adjacent to $V_0$ in $\overline{G}$. By Lemma \ref{four} we assume that $|V_1|+|V_2|\geq 5$.
If $V_0$ contains a vertex $v$ with two neighbors $v_1,v_2$ in $\overline{G}$,  then $u \notin \{v_1,v_2\}$.
Let $g$ be: $g(v)=0, g(v_1)=1,g(v_2)=2$. Since $V_2$ is a clique in $\overline{G}$, by Lemma \ref{lemma3-3} $g$ can be extended to a 2RiDF of $\overline{G}$ with weight at most $|V_0|-1+3=|V_0|+2$. This shows that $\gamma_{\rm ri2}(\overline{G})\leq |V_0|+2$ and hence $\gamma_{\rm ri2}(G)+\gamma_{\rm ri2}(\overline{G})\leq n+2$, a contradiction. Therefore, every vertex in $V_0$ has degree at most 1 in $\overline{G}$, which implies that $|N_{\overline{G}}(\{x,y\})|\leq 2$ for any two vertices $x\in V_0,y\in V_0$ (observe that $|V_0|\geq 2$). This contradicts (2).
\end{proof}

\begin{lem}\label{add1}
Let $G$ be a graph of order $n\geq 4$ and  $u\in V(G)$. If $H=G-u$, the resulting graph obtained from $G$ by deleting $u$ and its incident edges, is connected and $\gamma_{\rm ri2}(H)=|V(H)|-1$, then $G$ has a 2RiDF $f$ such that $f(u)=1$ and  $f(v)=0$ for some $v\in V(H)$.
\end{lem}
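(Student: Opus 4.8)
The plan is to use the structural classification of $H$ furnished by Theorem \ref{n-1} and then build the required function explicitly in each case. Since $H=G-u$ is connected with $|V(H)|=n-1\geq 3$ and $\gamma_{\rm ri2}(H)=|V(H)|-1$, Theorem \ref{n-1} shows that $H$ is isomorphic to one of $S_{m-1}$, $S_{m-1}^+$, $S(m-3,1)$ (with $m\geq 4$) or $C_5$ (with $m=5$), where $m=|V(H)|$. Throughout I write $N=N_G(u)\cap V(H)$ for the set of neighbours of $u$ inside $H$; the whole difficulty lies in coping with these edges, about which the hypothesis says nothing.

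The central tool is a refinement of the extension in Lemma \ref{lemma3-3} that keeps control of the label given to $u$. Fix a $\gamma_{\rm ri2}(H)$-function $g=(W_0,W_1,W_2)$; by hypothesis $|W_0|=1$, say $W_0=\{w\}$, so $w$ already has a neighbour in each of $W_1$ and $W_2$. If $u$ is non-adjacent to all of $W_1$, then $(W_0,\,W_1\cup\{u\},\,W_2)$ is a 2RiDF of $G$ with $f(u)=1$ and $f(w)=0$; if instead $u$ is non-adjacent to all of $W_2$, the function $(W_0,\,W_2\cup\{u\},\,W_1)$ works, the interchange of the labels $1$ and $2$ being legitimate because the roles of $V_1$ and $V_2$ are symmetric. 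Hence the reduction succeeds as soon as $H$ has a single-zero 2RiDF one of whose two colour classes avoids $N$, and the only configurations needing separate treatment are those in which $N$ meets both colour classes of every single-zero 2RiDF of $H$.

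Two extreme configurations are immediate. If $N=\emptyset$ (in particular if $u$ is isolated in $G$), any single-zero 2RiDF of $H$ works through the reduction. If $N=V(H)$, take a maximal independent set $I$ of $H$ and set $f(u)=1$, $f(x)=2$ for $x\in I$ and $f(x)=0$ otherwise: every $0$-vertex is adjacent to $u\in V_1$ and, by maximality of $I$, to a vertex of $I\subseteq V_2$, while $V(H)\setminus I\neq\emptyset$ since $H$ has an edge. For $S_{m-1}$ and $S_{m-1}^+$ there is a vertex $c$ dominating $H$ (the centre), which makes the remaining analysis short: if no leaf lies in $N$ then $N\subseteq\{c\}$ and the reduction applies to a single-zero 2RiDF with $c$ at $0$; otherwise put $f(u)=1$ and $f(c)=2$, give $0$ to the leaves in $N$ and $1$ to the leaves outside $N$, demoting one endpoint of the extra edge of $S_{m-1}^+$ from $1$ to $0$ in the single sub-case where both its endpoints would otherwise receive $1$ (that endpoint is then dominated by the other endpoint at $1$ and by $c$ at $2$).

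The genuine obstacle is $S(m-3,1)$ and $C_5$, where no vertex dominates $H$ and the reduction can fail for every single-zero 2RiDF. For the double star, with centres $u_0,v_0$, bridge $u_0v_0$, leaves $u_1,\dots,u_{m-3}$ at $u_0$ and leaf $v_1$ at $v_0$, the plan is: if $N\subseteq\{u_0,v_1\}$, run the reduction through the single-zero 2RiDF with $u_0$ at $0$, keeping $v_1$ in the $2$-class so that the $1$-class stays inside $\{v_0,u_1,\dots,u_{m-3}\}$ and hence avoids $N$; otherwise $N$ meets $\{v_0,u_1,\dots,u_{m-3}\}$, and one sets $f(u)=1$, $f(u_0)=2$, gives $0$ to the vertices of $N$ in $\{v_0,u_1,\dots,u_{m-3}\}$ and distributes $1$'s and $2$'s over the rest, the only edge to watch being $v_0v_1$. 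For $C_5=x_1x_2x_3x_4x_5$ the single-zero 2RiDFs are exactly the cyclic patterns $0,1,2,1,2$, whose $1$-class can be made to be any pair of vertices at cyclic distance $2$; such a pair avoids $N$ unless $N$ meets all five distance-$2$ pairs, which happens only when $|N|=4$ or $N$ consists of three consecutive vertices. These finitely many residual patterns are dispatched by an explicit assignment placing two non-adjacent cycle vertices at $2$, putting $u$ at $1$, and leaving the remaining neighbours of $u$ at $0$. In every branch the two points to verify are the same, and they are the crux of the argument: that the colour class receiving $u$ stays independent, and that at least one vertex of $H$ survives at $0$ with a neighbour in each of $V_1$ and $V_2$.
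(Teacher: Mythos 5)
Your argument is correct in substance, but it takes a far heavier route than the paper's. You classify $H$ via Theorem \ref{n-1} and then run a case analysis over the four graph types and over $N=N_G(u)\cap V(H)$. The paper instead exploits the fact that the conclusion imposes no weight bound, so a greedy extension suffices: if $u$ has a neighbour $u_1$ in $H$, pick a neighbour $u_2$ of $u_1$ in $H$ (which exists because $H$ is connected of order at least $3$), set $f(u_1)=0$, $f(u)=1$, $f(u_2)=2$ --- a 2RiDF of $G[\{u,u_1,u_2\}]$ --- and extend it to all of $G$ by Lemma \ref{lemma3-3}, which never alters already-assigned values; if $u$ has no neighbour in $H$, append $f(u)=1$ to any $\gamma_{\rm ri2}(H)$-function, whose unique $0$-vertex supplies the required $v$. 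Note that the hypothesis $\gamma_{\rm ri2}(H)=|V(H)|-1$ is only needed in this second, isolated case, and the structural classification is not needed at all. Your approach buys nothing extra here and costs considerable length; moreover a couple of your branches are under-verified --- for instance, in the double-star case with $N$ meeting $\{v_0,u_1,\dots,u_{m-3}\}$, the edge $v_0v_1$ is not ``the only edge to watch'': since $u_0$ carries label $2$, the edges $u_0v_0$ and $u_0u_i$ force every unlabelled $u_i$ and $v_0$ into class $1$, and only then is $v_1$ forced to $2$; likewise the residual $C_5$ patterns are asserted rather than checked. These gaps are all repairable, but the greedy extension makes the whole case analysis unnecessary.
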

\begin{proof}
Clearly, $|V(H)|\geq 3$.  If $u$ is not adjacent to $V(H)$, then let $f$ be: $f(u)=1$ and $f(v)=g(v)$ for every $v\in V(H)$ where $g$ is a $\gamma_{\rm ri2}(H)$-function of $H$. Since $\gamma_{\rm ri2}(H)=|V(H)|-1$, there is a $v\in V(H)$ satisfying $f(v)=g(v)=0$. If $u$ is adjacent to a vertex $u_1\in V(H)$, then there is a vertex $u_2\in V(H)$ adjacent to $u_1$ since $H$ is connected.   Let $f$ be: $f(u_1)=0, f(u)=1, f(u_2)=2$. Then, by Lemma \ref{lemma3-3} $f$ can be extended to a desired 2RiDF of $G$. \qed
\end{proof}

Now, we turn to the proof of our main result.

\begin{thm}\label{mainresult}
Let $G$ be a graph of order $n(\geq 2)$. If $G\not\cong C_5$, then $\gamma_{\rm ri2}(G)+\gamma_{\rm ri2}(\overline{G})\leq n+2$.
\end{thm}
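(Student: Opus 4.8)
The plan is to argue by contradiction, starting from the known Nordhaus--Gaddum bound of \v{S}umenjak et al., which gives $\gamma_{\rm ri2}(G)+\gamma_{\rm ri2}(\overline{G})\le n+3$; it therefore suffices to rule out equality when $G\not\cong C_5$. So suppose $\gamma_{\rm ri2}(G)+\gamma_{\rm ri2}(\overline{G})=n+3$. First I would pin down the two summands. If either equalled $n$, then Lemma \ref{n} would force the other to be $2$ and the sum to be $n+2$; hence both summands are at most $n-1$, and since they add to $n+3$ each is at least $4$. If $\gamma_{\rm ri2}(G)=4$, then Lemma \ref{four} (applicable because $G\not\cong C_5$) gives $\gamma_{\rm ri2}(\overline{G})\le n-2$ and sum $\le n+2$, a contradiction; the symmetric argument with $G$ and $\overline{G}$ interchanged uses $\overline{G}\not\cong C_5$, which holds because $C_5$ is self-complementary. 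Hence $\gamma_{\rm ri2}(G),\gamma_{\rm ri2}(\overline{G})\ge 5$, so in particular $\gamma_{\rm ri2}(G)\ge 4$ and Lemma \ref{add8-8} applies.

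Now fix a $\gamma_{\rm ri2}(G)$-function $f=(V_0,V_1,V_2)$. By Lemma \ref{add8-8}(3) we have $|V_0|,|V_1|,|V_2|\ge 2$, and parts (1) and (2) control the neighbourhoods in $\overline{G}$ of pairs of $V_0$-vertices. The engine of the proof is a reformulation of part (1): for $u,v\in V_0$ build the bipartite graph $B(u,v)$ with parts $N_{\overline{G}}(u)$ and $N_{\overline{G}}(v)$, joining $a$ to $b$ exactly when $a=b$ or $ab\notin E(\overline{G})$. A matching of size $2$ in $B(u,v)$ is precisely a forbidden configuration $u_1\ne u_2,\ v_1\ne v_2$ of part (1); such a configuration yields a 2RiDF of $\overline{G}[\{u,v,u_1,u_2,v_1,v_2\}]$ with zero-set $\{u,v\}$, which, since $V_1$ and $V_2$ are cliques in $\overline{G}$ and thus contribute at most two non-zero vertices each to any 2RiDF of $\overline{G}$, extends by Lemma \ref{lemma3-3} to a 2RiDF of $\overline{G}$ of weight at most $|V_0|+2$, giving sum $\le n+2$. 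So the assumption forces $B(u,v)$ to have maximum matching at most $1$, whence by K\"onig's theorem a single vertex covers all its edges. By symmetry the same reformulation applies to a $\gamma_{\rm ri2}(\overline{G})$-function, which I would keep in reserve.

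The remaining, and hardest, step is to show that this cover condition, held simultaneously for every pair $u,v\in V_0$ together with $|N_{\overline{G}}(\{u,v\})|\ge 3$ from (2) and with $|V_0|,|V_1|,|V_2|\ge 2$, is untenable. I would first extract local consequences: two vertices of $V_0$ share at most one neighbour in $\overline{G}$ (otherwise two common neighbours give the two self-edges of a $2$-matching), and once the K\"onig cover vertex is removed the leftover parts of $N_{\overline{G}}(u)$ and $N_{\overline{G}}(v)$ are completely joined in $\overline{G}$. I would then run a short case analysis on the position of the cover vertex and on whether $uv\in E(G)$, showing that in each branch either a four-vertex colouring does dominate $\{u,v\}$ after all, reviving the forbidden configuration and forcing sum $\le n+2$, or the neighbourhood structure is so degenerate that, for a suitable vertex $w$, the relevant component of $\overline{G}-w$ is one of the graphs $S_m,S_m^+,S(m,1)$ of Theorem \ref{n-1}; in the latter case Lemma \ref{add1} produces a 2RiDF of $\overline{G}$ of weight at most $n+2-\gamma_{\rm ri2}(G)$, the final contradiction. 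The main obstacle is exactly this case analysis: organising the K\"onig-cover possibilities exhaustively while correctly handling the overlap $N_{\overline{G}}(u)\cap N_{\overline{G}}(v)$ and the edge case $uv\notin E(G)$, and verifying that every degenerate branch genuinely falls within the scope of Theorem \ref{n-1} so that Lemma \ref{add1} may be invoked.
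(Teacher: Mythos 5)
Your opening moves match the paper's: reduce to ruling out $\gamma_{\rm ri2}(G)+\gamma_{\rm ri2}(\overline{G})=n+3$, use Lemma \ref{n} to exclude a summand equal to $n$, use Lemma \ref{four} (noting $C_5$ is self-complementary) to exclude a summand equal to $4$, fix a $\gamma_{\rm ri2}(G)$-function $(V_0,V_1,V_2)$ and invoke Lemma \ref{add8-8}. Your reformulation of Lemma \ref{add8-8}(1) as ``the auxiliary bipartite graph $B(u,v)$ has no $2$-matching, hence by K\"onig a single cover vertex'' is correct and is a nice way to package that condition. But from that point on the proposal is a plan rather than a proof, and the part you defer as ``a short case analysis'' is in fact essentially the entire content of the theorem: in the paper it occupies several pages, organized around a dichotomy you never identify, namely whether $\gamma_{\rm ri2}(\overline{G}[V_0])$ equals $|V_0|-1$ or $|V_0|$. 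In the first case Theorem \ref{n-1-thm} pins down the global component structure of $\overline{G}[V_0]$ (one star-like or $C_5$ component plus $K_1$/$K_2$ components), and the argument proceeds through Claims 3--4 with explicit 2RiDF constructions; in the second case every component of $\overline{G}[V_0]$ is $K_1$ or $K_2$ and the analysis splits on whether a chosen pair $u,v\in V_0$ is adjacent in $\overline{G}$. Your pairwise, cover-vertex-centred analysis has no mechanism for seeing this global component structure of $\overline{G}[V_0]$, which is what actually drives the contradictions.

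A second concrete gap: the paper's proof requires a carefully chosen starting function $f_0$, namely a $\gamma_{\rm ri2}(G)$-function maximizing the number of $K_2$ components of $\overline{G}[V_0]$. This extremal choice is not cosmetic --- the final contradiction in the subcase $uv\notin E(\overline{G})$ of Case 2 is obtained precisely by constructing another $\gamma_{\rm ri2}(G)$-function whose zero set induces a $K_2$ in $\overline{G}$, contradicting the maximality. Your proposal fixes an arbitrary $\gamma_{\rm ri2}(G)$-function and has no analogue of this device. Your suggested endgame --- that every degenerate branch lands a component of $\overline{G}-w$ inside the scope of Theorem \ref{n-1} so that Lemma \ref{add1} applies --- also does not reflect how the argument actually closes: Lemma \ref{add1} is used only where $\overline{G}[V_0]$ has a component $H'$ with $\gamma_{\rm ri2}(H')=|V(H')|-1$, and most branches instead end by exhibiting an explicit 2RiDF of $G$ of weight below $|V_1|+|V_2|$ or of $\overline{G}$ of weight at most $|V_0|+2$. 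As written, the proposal identifies the right entry point but leaves the theorem unproved.
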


\begin{proof}
 We may assume that $n\geq 5$  as the statement holds trivially when $n=2,3,4$.
  Let $f_0=(V_0,V_1,V_2)$ be a  $\gamma_{\rm ri2}(G)$-function such that $\overline{G}[V_0]$ contains the maximum number of components isomorphic to $K_2$.
Suppose to the contrary that  $\gamma_{\rm ri2}(G)+\gamma_{\rm ri2}(\overline{G})>n+2$. Then,  $\gamma_{\rm ri2}(G)+\gamma_{\rm ri2}(\overline{G})=n+3$ since $\gamma_{\rm ri2}(G)+\gamma_{\rm ri2}(\overline{G})\leq n+3$ \cite{tadeja2018}, that is,
\begin{equation}\label{equation-0-0}
\gamma_{\rm ri2}(\overline{G})=|V_0|+3
\end{equation}
Formula (\ref{equation-0-0}) indicates that every 2RiDF of $\overline{G}$ has weight at least $|V_0|+3$. We will complete our proof by constructing  a 2RiDF of $\overline{G}$ of weight at most $|V_0|+2$ or  a 2RiDF of $G$ of weight less than $|V_1|+|V_2|$.

If $|V_1\cup V_2|=2$, then $\gamma_{\rm ri2}(G)+\gamma_{\rm ri2}(\overline{G})\leq 2+n$, a contradiction; if $|V_1\cup V_2|=3$, then $\gamma_{\rm ri2}(\overline{G})=n$ and by Lemma \ref{n} $\gamma_{\rm ri2}(G)=2$, also a contradiction. Therefore, by Lemma \ref{four},
\begin{equation}\label{equation-0}
|V_1|+|V_2|\geq 5
\end{equation}
Then,  by Lemma \ref{add8-8} (3) we have $|V_i|\geq 2$ for $i=0,1,2$.
In addition, because, by definition, $\overline{G}[V_i]$
is a clique, $i=1,2$, it follows that for every 2RiDF $g_0=(V'_0, V'_1, V'_2)$ of $\overline{G}$,
\begin{equation}\label{equation-1}
 |(V'_1\cup V'_2)\cap V_i|\leq 2, i=1,2
\end{equation}
Therefore,  by Lemma \ref{lemma3-3} every $\gamma_{\rm ri2}(\overline{G}[V_0])$-function can be extended to a 2RiDF of $\overline{G}$ with weight at most $\gamma_{\rm ri2}(\overline{G}[V_0])+4$, i.e., $\gamma_{\rm ri2}(\overline{G}[V_0])\geq |V_0|-1$ by Formula (\ref{equation-0-0}).

\textbf{Claim 1.} \emph{Let $\ell$ be the number of vertices in  $V_1\cup V_2$ which have degree $|V_1|+|V_2|-1$ in $\overline{G}[V_1\cup V_2]$. Then, $\ell \leq 1-\ell'$ where $\ell'=|V_0|-\gamma_{\rm ri2}(\overline{G}[V_0])$} $\leq 1$. If not,  either $\ell\geq 2$ or $\ell=\ell'=1$.
 Suppose that $\ell\geq 2$ and let $v_1$ and $v_2$ be two vertices of $V_1\cup V_2$ that are adjacent to all vertices of $(V_1\cup V_2)\setminus \{u,v\}$ in $\overline{G}$. Let $g'$ be: $g'(v_1)=1,g'(v_2)=2, g'(x)=0$ for $x\in V_1\cup V_2\setminus \{v_1,v_2\}$. Clearly, $g'$ is a 2RiDF of $\overline{G}[V_1\cup V_2]$ and by Lemma \ref{lemma3-3} $g'$ can be extended to a 2RiDF of $\overline{G}$ with weight at most $|V_0|+2$, a contradiction.  Now, suppose that $\ell=\ell'=1$. Then, $\gamma_{\rm ri2}(\overline{G}[V_0])=|V_0|-1$, which indicates that $\overline{G}[V_0]$ has a component $H'$ such that $\gamma_{\rm ri2}(H')=|V(H')|-1$. Since $\ell=1$, there is a vertex $v$, say $v\in V_1$, which is adjacent to all vertices of $V_2$ in $\overline{G}$. By Lemma \ref{add1} $\overline{G}[V(H')\cup \{v\}]$ has a  2RiDF $g'$ for which $g'(v)=1$ and $g'(x)=0$ for some $x\in V(H')$. Observe that every vertex in $(V_1\cup V_2)\setminus \{v\}$  is adjacent to $v$ in $\overline{G}$;
by the rule of Lemma \ref{lemma3-3} $g'$ can be extended to a 2RiDF $g$ of $\overline{G}$ under which there is at most one vertex in $V_1\setminus \{v\}$ (and $V_2$) not assigned value 0. Thus, $w(g)\leq |V_0|-1+3=|V_0|+2$, a contradiction. This completes the proof of Claim 1. $\blacksquare$

In the following, without loss of generality we assume $|V_1|\geq |V_2|$. Then, $|V_1|\geq 3$ by Formula (\ref{equation-0}).

\textbf{Claim 2.} \emph{$\overline{G}[V_0]$ contains no isolated vertex $v$ such that $N_{\overline{G}}(v)\cap V_1=\emptyset$}. Otherwise, let $f'$ be: $f'(v)=1, f'(x)=2$ for $x\in V_2$. By Claim 1, $V_1$ has at most one vertex adjacent to all vertices of $V_2$ in $\overline{G}$; say $v'$ if such a vertex exists.  We further let $f(y)=0$ for $y\in V_1\cup (V_0\setminus \{v\})$ (or for $y\in (V_1\setminus \{v'\})\cup (V_0\setminus \{v\})$ if $v'$ exists). Since  every vertex in $V_1\cup V_0$ (except for $v'$) is adjacent to both $v$ and $V_2$ in $G$,  $f$ is a 2RiDF of $G$ of weight at most $|V_2|+2$, a contradiction.  $\blacksquare$

We proceed by distinguishing two cases: $\gamma_{\rm ri2}(\overline{G}[V_0])=|V_0|-1$ and $\gamma_{\rm ri2}(\overline{G}[V_0])=|V_0|$.

\textbf{Case 1.} $\gamma_{\rm ri2}(\overline{G}[V_0])= |V_0|-1$. In this case,  by Claim 1  every vertex in $V_i$ has a neighbor in $V_j$ in $G$ where $\{i,j\}$=[1,2]; by Theorem \ref{n-1-thm}, $\overline{G}[V_0]$  has one component $H$ isomorphic to one of $S_{|V(H)|-1}$ ($|V(H)|\geq 3$), $S_{|V(H)|-1}^+$ ($|V(H)|\geq 3$), $S(|V(H)|-3,1)$ ($|V(H)|\geq 4$) and $C_5$, and other components of $\overline{G}[V_0]$  are isomorphic to $K_1$ or $K_2$. Let $u_0\in V(H)$ be a vertex with $d_H(u_0)=\Delta(H)$. Clearly, $d_{H}(u_0)\geq 2$. Let $u_1 \in N_H(u_0)$ and $u_2 \in N_H(u_0)$ be two vertices such that every vertex in $V(H)\setminus \{u_0,u_1,u_2\}$ has degree in $H$ not exceeding $\min \{d_H(u_1),d_H(u_2)\}$.
By the structure of $H$, for $i=1,2$, we have that $d_{H}(u_i)\leq 2$ and if $u_i$ has a neighbor $u'_i(\neq u_0$) in $H$, then $u_0u'_i\notin E(H)$.
Moreover, by Lemma \ref{add8-8} (1), $(N_{\overline{G}}(u_1)\cap N_{\overline{G}}(u_2))\setminus \{u_0\}=\emptyset$, which implies that every vertex in $V_1\cup V_2$ is adjacent to $u_1$ or $u_2$ in $G$.


\textbf{Claim 3.} \emph{$|V_0\setminus V(H)|\leq 1$}. Otherwise, let $\{v_1,v_2\}\subseteq (V_0\setminus V(H))$. Then, $d_{\overline{G}[V_0]}(v_1)\leq 1$ and $d_{\overline{G}[V_0]}(v_2)\leq 1$.   Suppose that $d_{\overline{G}[V_0]}(v_1)=1$ (the case of $d_{\overline{G}[V_0]}(v_2)=1$ can be similarly discussed). Let $v_1v'_1\in E(\overline{G}[V_0])$ and clearly $d_{\overline{G}[V_0]}(v'_1)=1$.
 By Lemma \ref{add8-8} (2), there exists a vertex $v_0\in (V_1\cup V_2)$ that is adjacent to $\{v_1,v'_1\}$ in $\overline{G}$. Without loss of generality, we assume that $v_1v_0\in E(\overline{G})$. By Lemma \ref{add1},  $\overline{G}[V(H)\cup \{v_0\}]$ has a 2RiDF $g'$ such that $g'(v_0)=1$ and $g'(x)=0$ for some $x\in V(H)$. Further, let $g'(v_1)=0$ and  $g'(v'_1)=2$. Then, $g'$ is a 2RiDF of $\overline{G}[V(H)\cup\{v_0,v_1,v'_1\}]$, and by Lemma \ref{lemma3-3} and Formula (\ref{equation-1})  $g'$ can be extended to a 2RiDF  of $\overline{G}$ with weight at most $|V_0|-2+4=|V_0|+2$ (since $g'(v_1)=g'(x)=0$),  a contradiction.   We therefore assume that $d_{\overline{G}[V_0]}(v_1)=d_{\overline{G}[V_0]}(v_2)=0$. By Lemma \ref{add8-8} (2) we have $|N_{\overline{G}}\{v_1,v_2\}\cap (V_1\cup V_2)|\geq 3$. Without loss of generality, we may assume that $v_1$ is adjacent to two vertices of $V_1\cup V_2$ in $\overline{G}$, say $v_{11}$ and $v_{12}$. By Lemma \ref{add8-8} (1), $u_i$ is not adjacent to both $v_{11}$ and $v_{12}$, and $v_{1j}$ is not adjacent to both $u_1$ and $u_2$ in $\overline{G}$, where $i\in [1,2]$ and $j\in [1,2]$.  Thus, it follows that  $u_1v_{11}\notin E(\overline{G})$ and  $u_2v_{12}\notin E(\overline{G})$, or $u_1v_{12}\notin E(\overline{G})$ and  $u_2v_{11}\notin E(\overline{G})$, which contradicts to Lemma \ref{add8-8} (1) again.  $\blacksquare$

By  Claim 3, we see that $\overline{G}[V_0]$ contains no component isomorphic to $K_2$ and contains at most one $K_1$ component.

\textbf{Claim 4.} \emph{$\overline{G}[V_0]$ contains a $K_1$ component}.
If not, we have $\overline{G}[V_0]$=$H$.

\textbf{Claim 4.1.} $(N_{\overline{G}}(u_1)\cup N_{\overline{G}}(u_2))\cap (V_1\cup V_2)\neq \emptyset$. Otherwise, both $u_1$ and $u_2$ are adjacent to all vertices of $V_1\cup V_2$ in $G$, and by Lemma \ref{add8-8} (2)
$d_{H}(u_i)=2$ for $i=1,2$ and $u_1u_2\notin E(\overline{G})$. Let $\{u'_i\}=N_{H}(u_i)\setminus \{u_0\}, i=1,2$; then, $u_0u'_i\notin E(\overline{G})$. Let $f$ be: $f(u_1)=f(u'_1)=1$, $f(u_2)=f(u'_2)=2$ and $f(x)=0$ for $x\in V(G)\setminus \{u_1,u'_1,u_2,u'_2\}$. Then, $f$ is a 2RiDF  of $G$ with weight 4, a contradiction.   $\blacksquare$


\textbf{Claim 4.2.} \emph{$|V_1|=3$}. Observe that $|V_1|\geq 3$; it is enough to show that $G$ has a 2RiDF $f$ with $w(f)\leq |V_2|+3$.
When $u_1u_2\in E(\overline{G})$, let $f$ be: $f(u_0)=f(u_1)=f(u_2)=1, f(x)=0$ for $x\in (V_1\cup V_0)\setminus \{u_0,u_1,u_2\}$ and $f(y)=2$ for $y\in V_2$. By Lemma \ref{add8-8} (1), $V_1\cup V_0$ contains no vertex adjacent to both $u_1$ and $u_2$ in $\overline{G}$. Therefore, $f$ is a 2RiDF of $G$ of weight $|V_2|+3$. Now, suppose that $u_1u_2\notin E(\overline{G})$.
By Lemma \ref{add8-8} (1), $V_1$ contains at most one vertex adjacent to both $u_0$ and $u_1$ in $\overline{G}$; say $u$ if such a vertex exists. Let $f$ be: $f(u_0)=f(u_1)=1$ (or $f(u_0)=f(u_1)=f(u)=1$ if $u$ exists), $f(x)=0$ for $x\in (V_1\cup (V_0\setminus \{u_0,u_1\}))$ (or $x\in (V_1\cup V_0)\setminus \{u_0,u_1,u\}$) and $f(y)=2$ for $y\in V_2$.
 Notice that by Claim 1 every vertex in $V_0\cup V_1$ is adjacent to $V_2$ in $G$, and  by the structure of $H$ and the selection of $u_1$ and $u_2$, every vertex of $(V_0\cup V_1)\setminus \{u_0,u_1,u\}$ is adjacent to $\{u_0,u_1\}$ in $G$; $f$ is a 2RiDF of $G$ of weight at most $|V_2|+3$.
  $\blacksquare$

By Claim 4.2, we have $2\leq |V_2|\leq 3$. Let $V_1=\{w_1,w_2,w_3\}$ in the following.

\textbf{Claim 4.3.}  \emph{Every vertex of $V_i$ is adjacent to at most one vertex of $V_j$ in $\overline{G}$ for $\{i,j\}=[1,2]$}. If not, suppose that $V_2$ contains a vertex $v$ adjacent to two vertices of $V_1$ in $\overline{G}$, say $w_1,w_2$.
By Lemma \ref{add8-8} (1) $v$ is not adjacent to $u_1$ or $u_2$ in $\overline{G}$, say $u_1v\notin E(\overline{G})$.
If $u_2w_3\notin E(\overline{G})$, let $g'$ be:  $g'(u_i)=i$ for $i=0,1,2$, $g'(w_1)=g'(w_2)=0, g'(w_3)=2$, $g'(v)=1$. If $u_2w_3\in E(\overline{G})$, then $u_1w_3\notin E(\overline{G})$ and let $g'$ be: $g'(u_1)=g'(w_3)=1, g'(w_1)=g'(w_2)=0, g'(v)=2$; further, let $g'(u_2)=0$ when $u_2v\in E(\overline{G})$, or let $g'(u_2)=2$ and $g'(u_0)=0$ when $u_2v\notin E(\overline{G})$. By Lemma \ref{lemma3-3} in either case we can extended the $g'$ defined above to a 2RiDF $g$ of $\overline{G}$ under which
$g(w_1)=g(w_2)=0$ and $g(u_0)=0$ or $g(u_2)=0$. Therefore, by Formula (\ref{equation-1}) $w(g)\leq |V_0|-1+3=|V_0|+2$, a contradiction. With a similar argument, we can also get a contradiction if we assume  $V_1$ contains a vertex adjacent to two vertices of $V_2$ in $\overline{G}$.  $\blacksquare$


Now, we  consider the value of $|V_2|$. Suppose that $|V_2|=3$ and let $V_2=\{w_4,w_5,w_6\}$. By Claim 4.1, we may assume, without loss of generality, that $u_1w_1\in E(\overline{G})$. This indicates that $u_2w_1\notin E(\overline{G})$ by Lemma \ref{add8-8} (1).  If $u_2$ is adjacent to $V_2$, say $u_2w_4\in E(\overline{G})$,  then by Lemma \ref{add8-8} (1), $u_1w_4\notin E(\overline{G})$, $w_1w_4\in E(\overline{G})$, and  $u_1$ (resp. $u_2$) is not adjacent to $\{w_2,w_3\}$ (resp. $\{w_5,w_6\}$) in $\overline{G}$ (otherwise $w_4$ or $w_1$ is adjacent to two vertices of $V_1$ or $V_2$ in $\overline{G}$, respectively. This contradicts to Claim 4.3). Let $f$ be: $f(u_1)=f(w_1)=1, f(u_2)=f(w_4)=2$ and $f(x)=0$ for $x\in V(G)\setminus \{u_1,u_2,w_1,w_4\}$.
Observe that $w_1$ (resp. $w_4$) is not adjacent to $\{w_5,w_6\}$ (resp. $\{w_2,w_3\}$) in $\overline{G}$ and by Lemma \ref{add8-8} (1) $V_0\setminus \{u_0,u_1,u_2\}$ contains no vertex adjacent to both $u_i$ and $w_i$ for some $i\in [1,2]$.  Hence, $f$ is a 2RiDF of $G[V(G)\setminus \{u_0\}]$ of weight 4 and by Lemma \ref{lemma3-3} $f$ can be extended to a 2RiDF of $G$ with weight at most $5<|V_1|+|V_2|$, a contradiction.
Therefore, we may assume that $N_{\overline{G}}(u_2)\cap V_2=\emptyset$. In this case, when $N_{\overline{G}}(u_2)\cap V_1=\emptyset$, let $f$ be: $f(u_2)=2$, $f(u_0)=f(u_1)=1$. By Lemma \ref{add8-8} (1) $V_1\cup V_2$ contains at most one vertex $w'$ adjacent to both $u_0$ and $u_1$ in $\overline{G}$ and $V_0\setminus \{u_0\}$ contains at most one vertex $u'$ adjacent to $u_2$ in $\overline{G}$; we further let $f(x)=0$ for $x\in V(G)\setminus \{u_0,u_1,u_2,u',w'\}$. Then, $f$ is a 2RiDF of $G[V(G)\setminus \{u',w'\}]$ of weight 3 and by Lemma \ref{lemma3-3} $f$ can be extended to a  2RiDF of $G$ of weight at most $5<|V_1|+|V_2|$, a contradiction. We therefore suppose that  $u_2$ is adjacent to $V_1$ in $\overline{G}$, say $u_2w_2\in E(\overline{G})$. Then, with the same argument as $N_{\overline{G}}(u_2)\cap V_2=\emptyset$, we can show that $N_{\overline{G}}(u_1)\cap V_2=\emptyset$ as well.

Then, if $w_3u_1\notin E(\overline{G})$  and $w_3u_2\notin E(\overline{G})$, the function  $f$: $f(u_1)=f(w_1)=1, f(u_2)=f(w_4)=2$ and $f(x)=0$ for $x\in V(G)\setminus \{u_1,u_2,w_1,w_4, u_0\}$ is a 2RiDF of $G[V(G)\setminus \{u_0\}]$ with weight 4, and by Lemma \ref{lemma3-3} $f$ can be extended  to a 2RiDF of $G$ with weight at most $5<|V_1|+|V_2|$,  a contradiction. Therefore, we suppose that $w_3u_1\in E(\overline{G})$ by the symmetry. By Lemma \ref{add8-8} (1), it has that  $w_3u_2\notin E(\overline{G})$, and $u_0w_1\notin E(\overline{G})$ or  $u_0w_3\notin E(\overline{G})$, say $u_0w_1\notin E(\overline{G})$ by the symmetry.  Let $f$ be: $f(u_0)=f(u_1)=1, f(u_2)=f(w_2)=2$ and $f(x)=0$ for $x\in V(G)\setminus \{u_1,u_2,u_0, w_2,w_3\}$. Since every vertex in $V(G)\setminus \{u_1,u_2,u_0, w_2,w_3\}$ is adjacent to both $\{u_0,u_1\}$ and $\{u_2,w_2\}$ in $G$, $f$ is a 2RiDF of $G[V(G)\setminus \{w_3\}]$ of weight 4 and by Lemma \ref{lemma3-3} $f$ can be extended to a 2RiDF of $G$ of weight at most $5<|V_1|+|V_2|$, and  a contradiction.

A similar line of thought leads to a contradiction if we assume that $|V_2|=2$ and  proves Claim 4. $\blacksquare$

By Claim 4, we see  that $\overline{G}[V_0]$  contains one component isomorphic to $K_1$. Let $s$ be the vertex of the $K_1$ component. We first show that  $|N_{\overline{G}}(s)\cap (V_1\cup V_2)|\leq 1$. If not, we assume that $s$ is adjacent to two vertices of  $V_1\cup V_2$ in $\overline{G}$, say $s_1,s_2$.  By Lemma \ref{add8-8} (1)
$s_i$ (resp. $u_j$) is not adjacent to both $u_1$ and $u_2$ (resp. $s_1$ and $s_2$) in $\overline{G}$ for every $i,j\in [1,2]$. This implies that either  $s_iu_i\notin E(\overline{G})$ for $i=1,2$ or $s_1u_2\notin E(\overline{G})$ and $s_2u_1\notin E(\overline{G})$, which contradicts to Lemma \ref{add8-8} (1) as well. Thus, by Claim 2 $|N_{\overline{G}}(s)\cap (V_1\cup V_2)|=1$
 and the vertex $s'$  adjacent to $s$ in $\overline{G}$ belongs to $V_1$.
Let $f$ be: $f(s)=2$, $f(x)=1$ for $x\in V_1$, $f(y)=0$ for $y\in V_2\cup V(H))$. Observe that by Claim 1 every vertex in $V_2$ is adjacent to $V_1$ in $G$ and hence every vertex in $V_2\cup V(H)$ is adjacent to both $V_1$ and $s$ in $G$; $f$ is a 2RiDF of $G$ with weight $|V_1|+1<|V_1|+|V_2|$ (since $|V_2|\geq 2$), a contradiction.

The foregoing discussion shows that there exists a contradiction if we assume that  $\gamma_{\rm ri2}(\overline{G}[V_0])=|V_0|-1$. In the following, we consider the case of  $\gamma_{\rm ri2}(\overline{G}[V_0])= |V_0|$.

\textbf{ Case 2.} $\gamma_{\rm ri2}(\overline{G}[V_0])= |V_0|$. Then by Lemma \ref{n} each component of $\overline{G}[V_0]$ is isomorphic to $K_1$ or $K_2$. Recall that $|V_i|\geq 2$ for $i=0,1,2$.  Let $u,v$ be two vertices of $V_0$ such that  $uv\in E(\overline{G})$ if $\overline{G}[V_0]$ contains a $K_2$ component and  $u,v$ are isolated vertices in $\overline{G}[V_0]$ otherwise. By Lemma \ref{add8-8} (1), we have
\begin{equation}\label{0}
|(N_{\overline{G}}(u)\cap N_{\overline{G}}(v))\cap (V_1\cup V_2)|\leq 1
\end{equation}

We deal with two subcases in terms of the adjacency property of $u$ and $v$.

\textbf{Case 2.1.} $uv\in E(\overline{G})$. Then every vertex in $V_0\setminus \{u,v\}$ is not adjacent to $\{u,v\}$ in $\overline{G}$.

\textbf{Claim 5.} \emph{Every vertex in $V_1\cup V_2$ has degree at most $|V_1|+|V_2|-2$ in $\overline{G}[V_1\cup V_2]$}. Suppose that $V_1$ contains a vertex $w$ adjacent to all vertices of $V_2$ in $\overline{G}$. If $uw\in E(\overline{G})$  (or $vw\in E(\overline{G})$), then by Lemma \ref{lemma3-3} the 2RiDF $g'$ of $\overline{G}[\{u,v,w\}]$ such that $g'(u)=0$ (or $g'(v)=0), g'(w)=1$ and $g'(v)=2$ ($g'(u)=2$) can be extended to a 2RiDF of $\overline{G}$, under which $(V_1\cup V_2)\setminus \{w\}$ contains at most two vertices not assigned 0. Thus, $w(g)\leq |V_0|-1+3=|V_0|+2$, a contradiction. We therefore assume that $uw\notin E(\overline{G})$ and $vw\notin E(\overline{G})$. By Lemma \ref{add8-8} (2), $V_1\cup V_2$ contains at least three vertices adjacent to $u$ or $v$. Without loss of generality, we may suppose that there is a vertex $u'\in V_1\cup V_2$ that is adjacent to $u$ in $\overline{G}$. Define a 2RiDF $g'$ of $\overline{G}[\{u,v,u',w\}]$ as follows: $g'(u')=2, g'(u)=0$ and  $g'(v)=g'(w)=1$. Then, 
by Lemma \ref{lemma3-3} $g'$ can be extended to a 2RiDF $g$ of $\overline{G}$, under which  $(V_1\cup V_2)\setminus \{w,u'\}$ contains at most one vertex not assigned value 0. Therefore, $w(g)\leq |V_0|-1+3=|V_0|+2$, a contradiction.  With a similar argument, we can also obtain a contradiction if we assume that $V_2$ contains a vertex adjacent to all vertices of $V_1$. This completes the proof of Claim 5. $\blacksquare$

By Claim 5,  every vertex in $V_i$ has a neighbor in $V_j$ in $G$ for $\{i,j\}$=[1,2]. If $V_1\cap (N_{\overline{G}}(u)\cap N_{\overline{G}}(v))=\emptyset$, then every vertex in $V_1$ is adjacent to $u$ or $v$ in $G$.  Let $f$ be: $f(u)=f(v)=1, f(x)=2$ for $x\in V_2$ and $f(y)=0$ for $y\in V_1\cup (V_0\setminus \{u,v\})$. Clearly, $f$ is a 2RiDF of $G$ with  weight $|V_2|+2<|V_1|+|V_2|$, a contradiction. We therefore assume that $V_1$ contains a vertex $s$ such that $su\in E(\overline{G})$ and $sv\in E(\overline{G})$. Then, by Lemma \ref{add8-8} (1) $V_2\cup (V_1\setminus \{s\})$ contains no vertex adjacent to both $u$ and $v$ in $\overline{G}$. Analogously, the function $f$ such that $f(u)=f(v)=1, f(x)=2$ for $x\in V_1$ and $f(y)=0$ for $y\in V_2\cup (V_0\setminus \{u,v\})$ (and $f(u)=f(v)=f(s)=1, f(x)=2$ for $x\in V_2$ and $f(y)=0$ for $y\in (V_1\setminus \{s\})\cup (V_0\setminus \{u,v\})$) is a 2RiDF of $G$ with weight $|V_1|+2$ (and $|V_2|+3$). This implies that $|V_1|=3$ and $|V_2|=2$.
Let $V_1=\{s, s_1,s_2\}$ and $V_2=\{s_3,s_4\}$.¡¡Then, $\{u,v\}$ contains no vertex  adjacent to both $s_1$ and $s_2$ in $\overline{G}$; otherwise, we, by the symmetry, suppose that  $us_1\in E(\overline{G})$ and $us_2\in E(\overline{G})$. Then, the function $g'(u)=1,g'(s)=2, g'(v)=g'(s_1)=g'(s_2)=0$ is a 2RiDF of $\overline{G}[\{u,v,s,s_1,s_2\}]$ with weight 2, and by Lemma \ref{lemma3-3} $g'$ can be extended to a 2RiDF of $\overline{G}$ with weight at most $|V_0|-1+|V_2|+1=|V_0|+2$, a contradiction. In addition, by Lemma \ref{add8-8} (1) $s_i,i=1,2$, is not adjacent to both $u$ and $v$ in $\overline{G}$.
Therefore, we may assume, by the symmetry, that $s_1v\notin E(\overline{G})$ and  $s_2u\notin E(\overline{G})$.

Suppose that there are no edges between $\{u,v\}$ and $V_2$ in $\overline{G}$. By Lemmas \ref{add8-8} (2), $us_1\in E(\overline{G})$ and $vs_2\in E(\overline{G})$. Then, the function $g'$ such that $g'(u)=1,g'(s_2)=2, g'(s)=g'(s_1)=g'(v)=0$ is a 2RiDF of $\overline{G}[\{u,v,s,s_1,s_2\}]$ with weight 2. By Lemma \ref{lemma3-3} $g'$ can be extended to a 2RiDF of $\overline{G}$ with weight at most $|V_2|+1+|V_0|-1=|V_0|+2$, a contradiction. We therefore assume that there is an edge between $\{u,v\}$ and $V_2$ in $\overline{G}$, say $vs_3\in E(\overline{G})$ by the symmetry.

If  $s_4s\in E(\overline{G})$, then the function $g'$ such that $g'(s_3)=2, g'(s_4)=0, g'(s)=1,g'(v)=0$ is a 2RiDF of $\overline{G}[\{s,v, s_3,s_4\}]$ with weight 2, and by Lemma \ref{lemma3-3} and Formula \ref{equation-1} $g'$ can be extended to a 2RiDF of $\overline{G}$ of weight at most $|V_0|-1+3=|V_0|+2$, a contradiction. Consequently, we have $s_4s\notin E(\overline{G})$.  Then, the function $g'$ such that $g'(s_3)=0, g'(s_4)=g'(s)=2, g'(v)=1,g'(u)=0$ is a 2RiDF of $\overline{G}[\{s,u,v, s_3,s_4\}]$ with weight 3, and by Lemma \ref{lemma3-3} and Formula \ref{equation-1} $g'$ can be extended to a 2RiDF of $\overline{G}$ with weight at most $|V_0|-1+3=|V_0|+2$, a contradiction.

\textbf{Case 2.2.} $uv\notin E(\overline{G})$. Then, by the selection of $u,v$ and $f_0$, $\overline{G}[V_0]$ contains only isolated vertices and $G$ contains no $\gamma_{\rm ri2}(G)$-function for which the subgraph of $\overline{G}$ induced by the set of vertices assigned value 0 contains $K_2$ components.

For every $x\in V_0$, let $U_i^x=N_{\overline{G}}(x)\cap V_i$ for $i=1,2$. Let $f'$ be: $f'(u)=1,f'(v)=2$ and $f'(x)=0$ for $x\in ((V_1\cup V_2)\setminus (U^u_1\cup U^u_2\cup U^v_1 \cup U^v_2))\cup (V_0\setminus \{u,v\})$. Obviously, $f'$ is a 2RiDF of $G-(U^u_1\cup U^u_2\cup U^v_1 \cup U^v_2))$ with  weight 2.
By Lemma \ref{lemma3-3} $f'$ can be extended to a 2RiDF of $G$ with weight at most $|(U^u_1\cup U^u_2\cup U^v_1 \cup U^v_2))|+2$. To ensure   $|(U^u_1\cup U^u_2\cup U^v_1 \cup U^v_2))|+2\geq |V_1|+|V_2|$, we have

\begin{equation}\label{equa1}
|(V_1\cup V_2)\setminus (U^u_1\cup U^u_2\cup U^v_1 \cup U^v_2)|\leq 2
\end{equation}

\textbf{Claim 6.} \emph{$|(V_1\cup V_2)\setminus (U^u_1\cup U^u_2\cup U^v_1 \cup U^v_2)|=2$ and the two vertices in $(V_1\cup V_2)\setminus (U^u_1\cup U^u_2\cup U^v_1 \cup U^v_2)$ are adjacent in $\overline{G}$}.
Let $g'$ be a 2RiDF of $\overline{G}[V_0]$ such that $g'(u)=g'(v)=1$.
Suppose that $|(V_1\cup V_2)\setminus (U^u_1\cup U^u_2\cup U^v_1 \cup U^v_2)|\leq 1$.
Since $V_1$ and $V_2$ are cliques in $\overline{G}$ and every vertex in $U^u_1\cup U^u_2\cup U^v_1 \cup U^v_2$ is adjacent to $u$ or $v$ in $\overline{G}$, by Lemma \ref{lemma3-3} $g'$ can be extended to a 2RiDF $g$ of $\overline{G}$ under which  at most one vertex in $V_i, i=1,2$, is not assigned value 0 (here if  $(V_1\cup V_2)\setminus (U^u_1\cup U^u_2\cup U^v_1 \cup U^v_2)$ contains a vertex, say $w$, then let $g(w)=2$). Clearly, $w(g)=w(g')+2\leq |V_0|+2$, a contradiction. Moreover, if $(V_1\cup V_2)\setminus (U^u_1\cup U^u_2\cup U^v_1 \cup U^v_2)$ contains two nonadjacent vertices in $\overline{G}$, say $w_1,w_2$, then $w_1$ and $w_2$ are not in the same set $V_i$ for some $i\in [1,2]$. Therefore, we can extend $g'$ to a 2RiDF $g$ of $\overline{G}$ by letting $g'(w_1)=g'(w_2)=2$ and $g'(x)=0$ for $x\in (V_1\cup V_2)\setminus \{w_1,w_2\}$. But $w(g)=w(g')+2 \leq |V_0|+2$,  a contradiction. $\blacksquare$

By Claim 6, $(V_1\cup V_2)\setminus (U^u_1\cup U^u_2\cup U^v_1 \cup U^v_2)$ contains two adjacent vertices  in $\overline{G}$, say $w_1,w_2$.  If $V_0\setminus \{u,v\}$ contains a vertex $z$ that is adjacent to $w_1$ (or $w_2$) in $\overline{G}$,  then let $g'$ be: $g'(u)=g'(v)=g'(z)=1$, $g'(w_1)=0$ (or $g'(w_2)=0$), $g'(w_2)=2$ (or $g'(w_1)=2$). Since every vertex $(V_1\cup V_2)\setminus \{w_2\}$ is adjacent to $\{z,u,v\}$ in $\overline{G}$ and every vertex in $V'\setminus \{w_2\}$ is adjacent to $w_2$ where $w_2\in V'$ for some $V'\in \{V_1,V_2\}$,   by Lemma \ref{lemma3-3}  $g'$ can be extended to a 2RiDF $g$ of $\overline{G}$ under which every vertex in $V'\setminus \{w_2\}$ is assigned value 0 and at most one vertex in  $\{V_1,V_2\}\setminus V'$ is not assigned value 0. Therefore, $w(g)\leq |V_0|+2$, a contradiction. This shows that every vertex in $V_0$ is not adjacent to  $\{w_1,w_2\}$ in $\overline{G}$.
Furthermore, if there exists a vertex $z\in V_0\setminus \{u,v\}$, then by Claim 6 we have  $(V_1\cup V_2)\setminus (U^u_1\cup U^u_2\cup U^z_1 \cup U^z_2)=\{w_1,w_2\}$ and $(V_1\cup V_2)\setminus (U^v_1\cup U^v_2\cup U^z_1 \cup U^z_2)=\{w_1,w_2\}$, which implies that $N_{\overline{G}}(z)=U^u_1\cup U^u_2\cup U^v_1 \cup U^v_2$. Then, the function $g'$ such that $g'(z)=1, g'(u)=g'(v)=2$ and $g'(x)=0$ for $x\in U^u_1\cup U^u_2\cup U^v_1 \cup U^v_2$ is a 2RiDF of $\overline{G}-(\{w_1,w_2\}\cup (V_0\setminus \{u,v,z\}))$ with  weight 3, and by Lemma \ref{lemma3-3} $g'$ can be extended to a 2RiDF of $\overline{G}$ with weight at most $(|V_0|+2-3)+3=|V_0|+2$, a contradiction. So far, we have shown that $V_0=\{u,v\}$, i.e., $\gamma_{\rm ri2}(G)=n-2$.

Now, let $f'$ be: $f'(u)=1, f'(v)=2$ and $f'(w_1)=f'(w_2)=0$. Clearly, $f'$ is a 2RiDF of $G[\{u,v,w_1,w_2\}]$. Then, by Lemma \ref{lemma3-3} $f'$ can be extended to a 2RiDF $f$ of $G$ with weight at most $n-2$. To ensure $w(f)\geq \gamma_{\rm ri2}(G)=n-2$, we have $w(f)=n-2$, i.e., $f$ is a $\gamma_{\rm ri2}(G)$-function.
 We see that the subgraph of $\overline{G}$ induced by $\{w_1w_2\}$ is isomorphic to $K_2$. But this contradicts the selection of $f_0$. Eventually, we complete the proof of Theorem \ref{mainresult}.  \qed
\end{proof}

\section{The $\mathcal{NP}$-completeness}
\label{sec2}
In this section, we study the $\mathcal{NP}$-completeness of the $k$-rainbow independent domination problem. To prove a given problem $P$ to be $\mathcal{NP}$-complete, we have to show that $P\in \mathcal{NP}$ and find a known $\mathcal{NP}$-complete problem that can be reduced to $P$ in polynomial time. Here, by establishing an equivalence relation between the domination  problem and $k$-rainbow independent domination problem, we can show that the $k$-rainbow independent domination problem is $\mathcal{NP}$-complete when restricted to bipartite graphs. Three problems involved in our proof are described as follows:

\vspace{0.05cm}

\textbf{The independent domination problem (IDP)} \cite{manlove1999}.

\emph{Input}: A graph $G$ and a positive integer $k$;

\emph{Property}: $G$ has an IDS with at most $k$ vertices.

\vspace{0.05cm}

\textbf{The domination problem (DP)} \cite{booth1982}.

\emph{Input}: A graph $G$ and a positive integer $k$;

\emph{Property}: $G$ has an dominating set  with at most $k$ vertices.

\vspace{0.05cm}

\textbf{The $k$-rainbow independent domination problem ($k$RiDP)}.

\emph{Input}: A graph $G$ and two positive integers $k$ and $k'$;

\emph{Property}: $G$ has a $k$RiDF with weight at most $k'$.

\vspace{0.05cm}

The operation of \emph{identifying} two vertices $x$ and $y$ of a graph $G$ is to replace these vertices
by a single vertex incident to all the edges which were incident in $G$ to either $x$ or
$y$.

\begin{thm}\label{NP1}
The $k$-rainbow independent domination problem is $\mathcal{NP}$-complete for bipartite graphs.
\end{thm}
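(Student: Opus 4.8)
The plan is to prove $k$RiDP is $\mathcal{NP}$-complete for bipartite graphs via two steps: first establish membership in $\mathcal{NP}$, then give a polynomial-time reduction from a known $\mathcal{NP}$-complete problem. Membership in $\mathcal{NP}$ is routine: given a candidate function $f\colon V(G)\to[0,k]$, one verifies in polynomial time that each color class $V_i$ ($i\in[1,k]$) is independent, that every vertex with $f(v)=0$ sees all $k$ colors in its neighborhood, and that $w(f)\le k'$. The substance of the theorem lies in the reduction, and since the statement restricts to bipartite graphs, the reduction must produce a bipartite instance.

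For the reduction I would exploit the hint in the surrounding text that the authors aim to relate $k$RiDP to the ordinary domination problem (DP), which is known to be $\mathcal{NP}$-complete (cited as \cite{booth1982}). The natural idea is: starting from an arbitrary graph $G$ (an instance of DP with bound $k$), build a bipartite graph $G'$ by a gadget construction so that $G'$ admits a $k'$RiDF of small weight precisely when $G$ has a dominating set of size at most $k$. A clean way to force independence of the color classes while controlling domination is to work with $2$RiDF (i.e.\ $k=2$) and encode each vertex and each ``color requirement'' using auxiliary leaf-vertices or the \emph{identifying} operation described just before the theorem. Concretely, I would attach to each original vertex a small pendant structure whose only role is to compel the nonzero labels to sit on an independent set, and introduce a second side of the bipartition consisting of vertices that must all be dominated; the bipartiteness is then automatic from the two-layer construction. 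The weight threshold $k'$ would be set to $k$ plus a fixed offset accounting for the mandatory gadget labels.

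The key correctness argument splits into two implications. For the forward direction, from a dominating set $D$ of $G$ with $|D|\le k$ I would construct a $k'$RiDF of $G'$ by assigning the nonzero colors along $D$ (and the forced gadget vertices), checking that each $0$-vertex of $G'$ indeed sees every color $i\in[1,k]$ in $G'$ and that each $V_i$ stays independent. For the converse, from a $k'$RiDF $f$ of $G'$ of weight at most $k'$ I would read off a dominating set of $G$ of size at most $k$, using the gadget to argue that the nonzero labels projected back to the original vertices must dominate $V(G)$. The \emph{identifying} operation is presumably used here to merge vertices so that a single label simultaneously satisfies several domination constraints, keeping the weight bound tight.

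The main obstacle I anticipate is the \emph{independence} constraint interacting with domination: ordinary domination allows a dominating set to induce any subgraph, whereas a $k$RiDF forces each $V_i$ to be an independent set. In a bipartite host graph this is easier to manage, but one must still design the gadget so that a minimum-weight $k$RiDF is never tempted to place two same-colored labels on adjacent original vertices in a way that would let it ``cheat'' the weight bound below $k$. Ensuring that the offset in $k'$ is exactly right, and that the gadget does not accidentally create a smaller-weight $k$RiDF unrelated to any dominating set, will require the most careful bookkeeping; this is where I would spend the bulk of the verification, cross-checking both directions of the weight inequality.
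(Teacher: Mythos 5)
Your overall strategy---membership in $\mathcal{NP}$ plus a reduction from the domination problem---matches the paper's, but the proposal stops exactly where the proof has to start: the gadget is never specified, and the one design decision you do commit to (begin with an \emph{arbitrary} graph $G$ and then ``bipartize'' it) manufactures the two difficulties you go on to flag without resolving. The paper's key move is to reduce from DP restricted to \emph{bipartite} graphs, which is already $\mathcal{NP}$-complete (Chang, as cited in the paper). With a bipartition $(X,Y)$ in hand, both problems you identify evaporate: given a dominating set $D$, assign color $1$ to $D\cap X$ and color $2$ to $D\cap Y$, so independence of each color class is automatic because $X$ and $Y$ are independent sets; and bipartiteness of the constructed graph $G'$ is inherited rather than engineered. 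The gadget itself is just $k-1$ pendant leaves attached to every vertex (a star $S_{k-1}$ identified at its center with the vertex). Since a leaf has a single neighbor, it can never be assigned $0$ when $k\ge 2$, so every $k$RiDF of $G'$ pays a forced cost of $(k-1)(m+n)$ on the leaves; a $0$-vertex can collect at most $k-1$ distinct colors from its own leaves, so it must see at least one nonzero original vertex, which is exactly the domination condition. This yields the exact correspondence between dominating sets of size $\ell$ and $k$RiDFs of weight $(k-1)(m+n)+\ell$. Without choosing a bipartite source instance (or an explicit bipartization that provably preserves domination numbers---a nontrivial task you do not attempt), your reduction does not exist, and your ``main obstacle'' paragraph is an accurate self-diagnosis rather than a proof step.

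Two smaller points. First, you propose to work only with $k=2$; the paper's leaf gadget handles every $k\ge 2$ uniformly (with $k=1$ dispatched separately via independent domination), so if the theorem is read as asserting hardness for each fixed $k$, the restriction to $k=2$ proves strictly less. Second, the \emph{identifying} operation is not used to ``merge vertices so that a single label satisfies several constraints''; it merely glues each star's center onto its original vertex.
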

\begin{proof}
The $k$RiDP is a member of $\mathcal{NP}$, since we can check in polynomial time that a function from vertex set to $\{0,1, \ldots, k\}$ has weight at most $k'$ and is a $k$RiDF.

\begin{figure}[H]
  \centering
  \includegraphics[width=11.5cm]{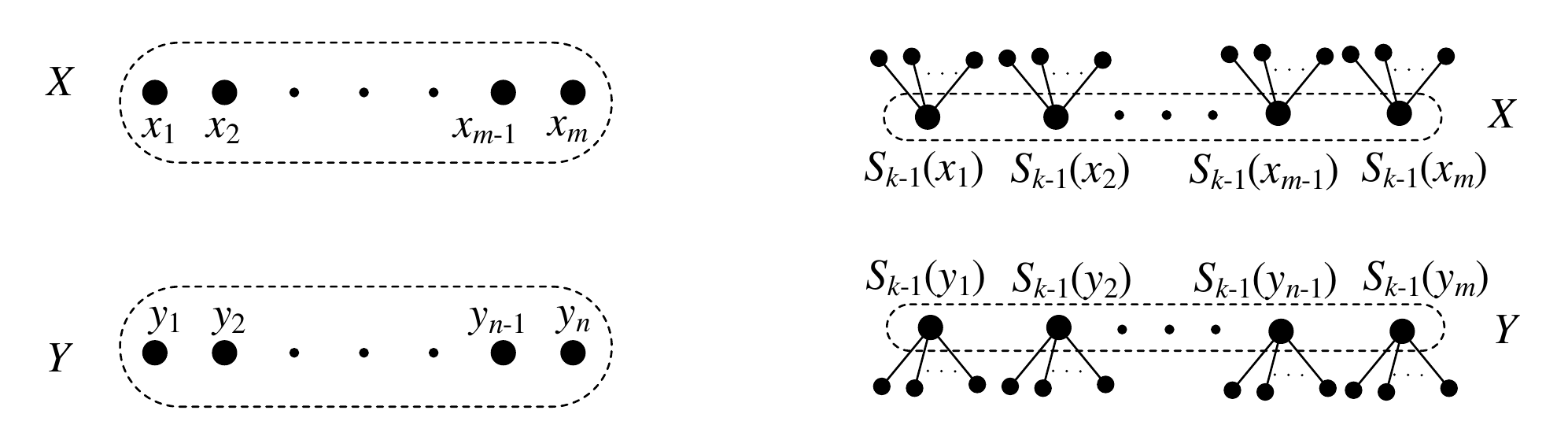}\\
  $G$\hspace{5cm} $G'$
  \caption{illustration of the construction from $G$ to $G'$}\label{fig1}
\end{figure}

When $k=1$, the $k$RiDP is equivalent to the IDP which is $\mathcal{NP}$-complete when $G$ is restricted to bipartite graphs \cite{goddard2013-0}. Therefore, we assume that $k\geq 2$. To show $\mathcal{NP}$-hardness, we give a reduction  from the domination problem (DP) for bipartite graphs, which is $\mathcal{NP}$-complete \cite{chang1984}.  Given a bipartite $G$ with a bipartition $(X,Y)$ where $X=\{x_1,x_2,\ldots, x_m\}$ and $Y=\{y_1,y_2,\ldots, y_n\}$, we construct a new graph $G'$ by adding $m+n$ copies of star $S_{k-1}$, denoted by $S_{k-1}(x_i)$ and $S_{k-1}(y_j)$ for $i\in [1,m]$ and $j\in [1,n]$, and identifying $w$ and the center of $S_{k-1}(w)$ for all $w\in \{x_i,y_j|i=1,\ldots,m, j=1,\dots, n\}$ (see Figure \ref{fig1}, in which we omit the edges between $X$ and $Y$). Clearly, $G'$ is also a bipartite graph.  We claim that $G'$ has a $k$RiDF with weight $(k-1)(m+n)+\ell$ if and only if $G$ has a dominating set of size $\ell$.

Given a $k$RiDF $f=(V_0,V_1,\ldots, V_k)$ of $G'$ with weight $(k-1)(m+n)+\ell$, let $D=(D'=V_1\cup \ldots \cup V_k)\cap (X\cup Y)$. Observe that all leaves of $S_{k-1}(x_i)$ and $S_{k-1}(y_j)$ for $i\in [1,m]$ and $j\in [1,n]$ belong to $D'$; therefore, $|D|=\ell$. Since $f$ is a $k$RiDF, it follows that every vertex in $V_0$ is adjacent to at least one vertex in $D$. Notice that $X\cup Y=V_0\cup D$; we see that $D$ is a dominating set of $G$.

Now, we  assume that $G$ has a dominating set $D$ where $|D|=\ell$. Let $D_1=D\cap X$ and $D_2=D\cap Y$. We define a function $f$: $V(G')\rightarrow [0,k]$ as follows: $f(v)=1$ for every $v\in D_1$, $f(v)=2$ for every $v\in D_2$ and $f(v)=0$ for every  $v\in (X\cup Y)\setminus D$. Since $G$ is  bipartite and $D$ is a dominating set of $G$, every vertex  $v\in (X\cup Y)\setminus D$ is adjacent to either $D_1$ or $D_2$ in $G$.
If $v$ is  adjacent to $D_i$ for some $i\in [1,2]$ in $G$, then we assign $[1,k]\setminus \{i\}$ to the $k-1$ leaves of $S_{k-1}(v)$ such that every leaf receives an unique number of $[1,k]\setminus \{i\}$.
Finally, for every $u\in D_i$ for $i=1,2$,  we assign $[1,k]\setminus \{i\}$ to the $k-1$ leaves of $S_{k-1}(u)$ such that every leaf receives an unique number of $[1,k]\setminus \{i\}$.
Clearly, $w(f)=|D|+(m+n)(k-1)$, $V_i$ is an independent set and every vertex in $V_0$ is adjacent to a vertex in $V_i$ for all $i\in [1,k]$. Therefore, $f$ is a $k$RiDF with weight $|\ell|+(m+n)(k-1)$. \qed
\end{proof}

\section{Conclusion}

In this paper, we respond some questions proposed by {\v{S}}umenjak et al. \cite{tadeja2018}, by proving an improved Nordhaus-Gaddum type inequality on  $k$-rainbow independent domination number and showing that the problem of deciding whether a graph has a $k$-rainbow independent dominating function of a given weight is $\mathcal{NP}$-complete. In the study, we proved that  when $G$ satisfies $\gamma_{\rm ri2}(G)=|V(G)|-1$ and $G\not\cong C_5$, it follows that $G$ is isomorphic to $S_n (n\geq 2)$, $S_n^+ (n\geq 2)$ or $S(n,1)(n\geq 1)$, and  $\gamma_{\rm ri2}(G)+\gamma_{\rm ri2}(\overline{G})=|V(G)|+2$. Additionally, we observe that $\gamma_{\rm ri2}(S(n,m)+\gamma_{\rm ri2}(\overline{S(n,m)})=|V(S(n,m))|+1$ when $m\geq 2$. Therefore, a question that arises is whether $S_n (n\geq 2)$, $S_n^+ (n\geq 2)$ and $S(n,1)(n\geq 1)$ are enough for determining graphs $G$ with the property of  $\gamma_{\rm ri2}(G)+\gamma_{\rm ri2}(\overline{G})=|V(G)|+2$. We formulate this more generally as follows:


\begin{question}
How to characterize graphs $G$ with $\gamma_{\rm ri2}(G)+\gamma_{\rm ri2}(\overline{G})=|V(G)|+2?$
\end{question}

\section*{Acknowledgments}
This work was supported by the National Natural Science Foundation of China (61872101, 61672051, 61309015, 61702075),  the China Postdoctoral Science Foundation under grant (2017M611223).





\section*{References}

 \bibliographystyle{elsarticle-num}
\bibliography{mybibfile}

\end{document}